\title{Exponential growth of ponds in invasion percolation on regular trees}
\author{Jesse Goodman}
\newcommand{\E}{\mathbb{E}}
\newcommand{\N}{\mathbb{N}}
\newcommand{\Z}{\mathbb{Z}}
\renewcommand{\P}{\mathbb{P}}
\newcommand{\condP}[2]{\mathbb{P}\left(\left.#1\,\right\vert#2\right)}
\newcommand{\condE}[2]{\mathbb{E}\left(\left.#1\,\right\vert#2\right)}
\newcommand{\indicator}[1]{\mathbbm{1}_{\left(#1\right)}}
\newcommand{\given}{\,\big\vert\,}
\newcommand{\boundary}{\partial}
\newcommand{\goesto}{\rightarrow}
\newcommand{\goesweaklyto}{\Rightarrow}
\newcommand{\connectsto}{\leftrightarrow}
\newcommand{\set}[1]{\left\{#1\right\}}
\newcommand{\abs}[1]{\left\vert#1\right\vert}
\newcommand{\floor}[1]{\left\lfloor#1\right\rfloor}
\newcommand{\ceil}[1]{\left\lceil#1\right\rceil}
\renewcommand{\th}{\text{th}}
\renewcommand{\phi}{\varphi}
\renewcommand{\d}[1]{\,d#1\,}
\newcommand{\dx}{\d{x}}
\newcommand{\dy}{\d{y}}
\newcounter{constantsubscript}[subsection]
\newcommand{\newnumberedconstant}{\addtocounter{constantsubscript}{1}c_{\arabic{constantsubscript}}}
\newcommand{\newnumberedConstant}{\addtocounter{constantsubscript}{1}C_{\arabic{constantsubscript}}}
\newcommand{\namednumberedconstant}[1]{\newnumberedconstant\newcounter{@constantsubscript@#1}\setcounter{@constantsubscript@#1}{\value{constantsubscript}}}
\newcommand{\namednumberedConstant}[1]{\newnumberedConstant\newcounter{@Constantsubscript@#1}\setcounter{@Constantsubscript@#1}{\value{constantsubscript}}}
\newcommand{\lastConstant}{C_{\arabic{constantsubscript}}}
\newcommand{\previousconstant}[1]{c_{\arabic{@constantsubscript@#1}}}
\newcommand{\previousConstant}[1]{C_{\arabic{@Constantsubscript@#1}}}
\newcommand{\calC}{{\cal{C}}}
\newcommand{\underlyinggraph}{{\cal{G}}}
\newtheorem{thm}{Theorem}[section]
\newtheorem{lemma}[thm]{Lemma}
\newtheorem{prop}[thm]{Proposition}
\newtheorem{coro}[thm]{Corollary}
\theoremstyle{definition}
\numberwithin{equation}{section}
\begin{document}

\maketitle

\abstract{In invasion percolation, the edges of successively maximal weight (the \emph{outlets}) divide the invasion cluster into a chain of \emph{ponds} separated by outlets.  On the regular tree, the ponds are shown to grow exponentially, with law of large numbers, central limit theorem and large deviation results.  The tail asymptotics for a fixed pond are also studied and are shown to be related to the asymptotics of a critical percolation cluster, with a logarithmic correction.}

\section{Introduction and definitions}

\subsection{The model: invasion percolation, ponds and outlets}

Consider an infinite connected locally finite graph $\underlyinggraph$, with a distinguished vertex $o$, the root.  On each edge, place an independent Uniform$[0,1]$ edge weight, which we may assume (a.s.) to be all distinct.  Starting from the subgraph $\calC_0=\set{o}$, inductively grow a sequence of subgraphs $\calC_i$ according to the following deterministic rule.  At step $i$, examine the edges on the boundary of $C_{i-1}$, and form $C_i$ by adjoining to $\calC_{i-1}$ the edge whose weight is minimal.  The infinite union 
\begin{equation}
%\label{}
\calC=\bigcup_{i=1}^\infty \calC_i
\end{equation}
is called the \emph{invasion cluster}.

Invasion percolation is closely related to ordinary (Bernoulli) percolation.  For instance, (\cite{CCN1985} for $\underlyinggraph = Z^d$; later greatly generalized by \cite{HPS1999}) if $\underlyinggraph$ is quasi-transitive, then for any $p>p_c$, only a finite number of edges of weight greater than $p$ are ever invaded.  On the other hand, it is elementary to show that for any $p<p_c$, infinitely many edges of weight greater than $p$ must be invaded.  In other words, writing $\xi_i$ for the weight of the $i^\th$ invaded edge, we have 
\begin{equation}
\label{limsupispcGeneral}
\limsup_{i\goesto\infty} \xi_i = p_c
\end{equation}
So invasion percolation produces an infinite cluster using only slightly more than critical edges, even though there may be no infinite cluster at criticality.  The fact that invasion percolation is linked to the critical value $p_c$, even though it contains no parameter in its definition, makes it an example of \emph{self-organized criticality}.

Under mild hypotheses (see section \ref{GeneralMarkovStructureSubsection}), the invasion cluster has a natural decomposition into \emph{ponds} and \emph{outlets}.  Let $e_1\in\calC$ be the edge whose weight $Q_1$ is the largest ever invaded.  For $n>1$, $e_n$ is the edge in $\calC$ whose weight $Q_n$ is the highest among edges invaded after $e_{n-1}$.  We call $e_n$ the \emph{$n^\th$ outlet} and $Q_n$ the corresponding \emph{outlet weight}.  Write $\hat{V}_n$ for the step at which $e_n$ was invaded, with $\hat{V}_0=0$.  The \emph{$n^\th$ pond} is the subgraph of edges invaded at steps $i\in (\hat{V}_{n-1}, \hat{V}_n]$.

Suppose an edge $e$, with weight $p$, is first examined at step $i\in(\hat{V}_{n-1}, \hat{V}_n]$.  (That is, $i$ is the first step at which $e$ is on the boundary of $\calC_{i-1}$.)  Then we have the following dichotomy: either 
\begin{itemize}
\item
$e$ will be invaded as part of the $n^\th$ pond (if $p\leq Q_n$); or 
\item
$e$ will never be invaded (if $p>Q_n$)
\end{itemize}
This implies that the ponds are connected subgraphs and touch each other only at the outlets.  Moreover, the outlets are pivotal in the sense that any infinite non-intersecting path in $\calC$ starting at $o$ must pass through every outlet.  Consequently $\calC$ is decomposed as an infinite chain of ponds, connected at the outlets.

In this paper we take $\underlyinggraph$ to be a regular tree and analyze the asymptotic behaviour of the ponds, the outlets and the outlet weights.  This problem can be approached in two directions: by considering the ponds as a sequence and studying the growth properties of that sequence; or by considering a fixed pond and finding its asymptotics.  We will see that the sequence of ponds grows exponentially, with exact exponential constants.  For a fixed pond, its asymptotics correspond to those of ordinary percolation with a logarithmic correction.  

These computations are based on representing $C$ in terms of the outlet weights $Q_n$, as in \cite{AGdHS2008}.  Conditional on $(Q_n)_{n=0}^\infty$, each pond is an independent percolation cluster with parameter related to $Q_n$.  In particular, the fluctuations of the ponds are a combination of fluctuations in $Q_n$ and the additional randomness.

Surprisingly, in all but the large deviation sense, the asymptotic behaviour for the ponds is controlled by the outlet weights alone: the remaining randomness after conditioning only on $(Q_n)_{n=0}^\infty$ disappears in the limit, and the fluctuations are attributable solely to fluctuations of $Q_n$.

\subsection{Known results}

The terminology of ponds and outlets comes from the following description (see \cite{vdBJV2007}) of invasion percolation.  Consider a random landscape where the edge weights represent the heights of channels between locations.  Pour water into the landscape at $o$; then as more and more water is added, it will flow into neighbouring areas according to the invasion percolation mechanism.  The water level at $o$, and throughout the first pond, will rise until it reaches the height of the first outlet.  Once water flows over an outlet, however, it will flow into a new pond where the water will only ever rise to a lower height.  Note that the water level in the $n^\th$ pond is the height (edge weight) of the $n^\th$ outlet.

The edge weights may also be interpreted as energy barriers for a random walker exploring a random energy landscape: see \cite{NewmanStein1995}.  If the energy levels are highly separated, then (with high probability and until some large time horizon) the walker will visit the ponds in order, spending a long time in each pond before crossing the next outlet.  In this interpretation the growth rate of the ponds determines the effect of entropy on this analysis.  See the extended discussion in \cite{NewmanStein1995}.

Invasion percolation is also related to the incipient infinite cluster (IIC), at least in the cases $\underlyinggraph=\Z^2$ (\cite{Jarai2003}) and $\underlyinggraph$ a regular tree: see, e.g., \cite{Jarai2003}, \cite{AGdHS2008} and \cite{DS2009}.  For a cylinder event $E$, the law of the IIC can be defined by
\begin{equation}
\label{IICdefinition}
\P_{\text{IIC}}(E)\overset{def}{=}\lim_{k\goesto\infty}\P_{p_c}(E\given o\connectsto\boundary B(k))
\end{equation}
or by other limiting procedures, many of which can be proved to be equivalent to each other.  Both the invasion cluster and the IIC consist of an infinite cluster that is ``almost critical'', in view of \eqref{limsupispcGeneral} or \eqref{IICdefinition} respectively.  For $\underlyinggraph=\Z^2$ (\cite{Jarai2003}) and $\underlyinggraph$ a regular tree (\cite{AGdHS2008}), the IIC can be defined in terms of the invasion cluster: if $X_k$ denotes a vertex chosen uniformly from among the invaded vertices within distance $k$ of $o$, and $\tau_{X_k}E$ denotes the translation of $E$ when $o$ is sent to $X_k$, then
\begin{equation}
%\label{}
\P_{\text{IIC}}(E)=\lim_{k\goesto\infty} P(\tau_{{X_k}}E)
\end{equation}
Surprisingly, despite this local equivalence, the invasion cluster and the IIC are globally different: they are mutually singular and, at least on the regular tree, have different scaling limits, although they have the same scaling exponents.

The regular tree case, first considered in \cite{NickWilk1983}, was studied in great detail in \cite{AGdHS2008}.  Any infinite non-intersecting path from $o$ must pass through every outlet; on a tree, this implies that there is a \emph{backbone}, the unique infinite non-intersecting path from $o$.  In \cite{AGdHS2008} a description of the invasion cluster was given in terms of the \emph{forward maximal weight process}, the outlet weights indexed by height along the backbone (see section \ref{InvasionClusterStructureSubsection}).  This parametrization in terms of the external geometry of the tree allowed the calculation of natural geometric quantities, such as the number of invaded edges within a ball.  In the following, we will see that when information about the heights is discarded, the process of edge weights takes an even simpler form.

The detailed structural information in \cite{AGdHS2008} was used in \cite{AngelGMerle} to identify the scaling limit of the invasion cluster (again for the regular tree).  Since the invasion cluster is a tree with a single infinite end, it can be encoded by its Lukaciewicz path or its height and contour functions.  Within each pond, the scaling limit of the Lukaciewicz path is computed, and the different ponds are stitched together to provide the full scaling limit.

The two-dimensional case was also studied in a series of papers by van den Berg, Damron, J\'{a}rai, Sapozhnikov and V\'{a}gv\"{o}lgyi (\cite{vdBJV2007}, \cite{DSV2008} and \cite{DS2009}).  There they study, among other things, the probability that the $n^\th$ pond extends a distance $k$ from $o$, for $n$ fixed.  For $n=1$ this is asymptotically of the same order as the probability that a critical percolation cluster extends a distance $k$, and for $n>1$ there is a correction factor $(\log k)^{n-1}$.  Furthermore an exponential growth bound for the ponds is given.  This present work was motivated in part by the question of what the corresponding results would be for the tree.  Quite remarkably, they are essentially the same, suggesting that a more general phenomenon may be involved.

In the results and proofs that follow, we shall see that the sequence of outlet weights plays a dominant role.  Indeed, all of the results in Theorems \ref{SLLNtheorem}--\ref{QnLnAsymp} are proved first for $Q_n$, then extended to other pond quantities using conditional tail estimates.  Consequently, all of the results can be understood as consequences of the growth mechanism for the sequence $Q_n$.  On the regular tree, we are able to give an exact description of the sequence $Q_n$ in terms of a sum of independent random variables (see section \ref{RegularTreeCaseSubsection}).  In more general graphs, this representation cannot be expected to hold exactly.  However, the similarities between the between the pond behaviours, even on graphs as different as the tree and $\Z^2$, suggest that an approximate analogue may hold.  Such a result would provide a unifying explanation for both the exponential pond growth and the asymptotics of a fixed pond, even on potentially quite general graphs.

\subsection{Summary of notation}

We will primarily consider the case where $\underlyinggraph$ is the forward regular tree of degree $\sigma$: namely, the tree in which the root $o$ has degree $\sigma$ and every other vertex has degree $\sigma+1$.  The weight of the $i^\th$ invaded edge is $\xi_i$.  The $n^\th$ outlet is $e_n$ and its edge weight is $Q_n$.  We may naturally consider $e_n$ to be an oriented edge $e_n=(\underline{v}_n,\overline{v}_n)$, where $\underline{v}_n$ is invaded before $\overline{v}_n$.  The step at which $e_n$ is invaded is denoted $\hat{V}_n$ and the (graph) distance from $o$ to $\overline{v}_n$ is $\hat{L}_n$.  Setting $\hat{V}_0=\hat{L}_0=0$ for convenience, we write $V_n=\hat{V}_n-\hat{V}_{n-1}$ and $L_n=\hat{L}_n-\hat{L}_{n-1}$.  

There is a natural geometric interpretation of $L_n$ as the length of the part of the backbone in the $n^\th$ pond, and $V_n$ as the volume (number of edges) of the $n^\th$ pond.  In particular $\hat{V}_n$ is the volume of the union of the first $n$ ponds.

$R_n$ is the length of the longest upward-pointing path in the $n^\th$ pond, and $R'_n$ is the length of the longest upward-pointing path in the union of the first $n$ ponds.

We shall later work with the quantity $\delta_n$; for its definition, see \eqref{deltanDefinition}.

We note the following elementary relations:
\begin{gather}
\hat{L}_n=\sum_{i=1}^n L_i,\qquad
\hat{V}_n=\sum_{i=1}^n V_i,\\
Q_{n+1}<Q_n,\qquad
L_n\leq R_n\leq R'_n\leq \sum_{i=1}^n R_i.
\end{gather}

Probability laws will generically be denoted $\P$.  For $p\in[0,1]$, $\P_p$ denotes the law of Bernoulli percolation with parameter $p$.  For a set $A$ of vertices, the event $\set{x\connectsto A}$ means that there is a path of open edges joining $x$ to some point of $A$, and $\set{x\connectsto\infty}$ means that there is an infinite non-intersecting path of open edges starting at $x$.  We define the percolation probability $\theta(p)=\P_p(o\connectsto\infty)$ and $p_c=\inf\set{p: \theta(p)>0}$.  $\boundary B(k)$ denotes the vertices at distance exactly $k$ from $o$.

For non-zero functions $f(x)$ and $g(x)$, we write $f(x)\sim g(x)$ if $\lim \frac{f(x)}{g(x)}=1$; the point at which the limit is to be taken will usually be clear from the context.  We write $f(x)\asymp g(x)$ if there are constants $c$ and $C$ such that $cg(x)\leq f(x)\leq Cg(x)$.

\section{Main results}

\subsection{Exponential growth of the ponds}

Let $\vec{Z}_n$ denote the 7-tuple
\begin{align}
%\label{}
\begin{split}
&
\vec{Z}_n=\Bigl(\log\!\left((Q_n-p_c)^{-1}\right)\! , \log L_n, \log \hat{L}_n, \\
&\qquad\qquad
\log R_n, \log R'_n, \tfrac{1}{2}\log V_n, \tfrac{1}{2}\log \hat{V}_n \Bigr)
\end{split}
\end{align}
and write $\mathbbm{1}=(1,1,1,1,1,1,1)$.

\begin{thm}
\label{SLLNtheorem}
With probability $1$, 
\begin{equation}
\label{ZnSLLN}
\lim_{n\goesto\infty} \frac{\vec{Z}_n}{n}=\mathbbm{1}.
\end{equation}
\end{thm}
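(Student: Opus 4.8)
The plan is to reduce everything to the single coordinate $\log((Q_n - p_c)^{-1})$ and then transfer the law of large numbers to the geometric quantities using the conditional (percolation) structure of the ponds. On the regular tree one has an exact representation (promised in section \ref{RegularTreeCaseSubsection}) of the outlet weights: conditional on $Q_{n-1}$, the next outlet weight $Q_n$ is the maximum of the weights encountered while invading from an outlet of weight $Q_{n-1}$, and this yields a clean description of $Q_n - p_c$. Concretely, with $p_c = 1/(\sigma-1)$, I expect $(Q_n - p_c)^{-1}$ to be (up to a bounded multiplicative correction, or exactly after a change of variable) a product of $n$ i.i.d.\ factors, equivalently $\log((Q_n - p_c)^{-1}) = \sum_{i=1}^n Y_i$ for i.i.d.\ $Y_i$ with finite positive mean. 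First I would pin down this representation and verify $\E Y_1 = 1$ by a direct computation with the tree's branching structure (the scaling $Q_n - p_c \asymp c^{-n}$ must come out with the right base so that the normalization by $n$ gives the limit $1$). Then the classical strong law of large numbers gives $\frac{1}{n}\log((Q_n - p_c)^{-1}) \to 1$ a.s., which is the first coordinate of \eqref{ZnSLLN}.

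Next I would handle $\log L_n$, $\log R_n$ and $\tfrac12 \log V_n$. Conditional on $(Q_n)_{n=0}^\infty$, the $n^{\text{th}}$ pond is (essentially) a subcritical-or-critical percolation cluster on the tree with parameter $Q_n$, grown from a vertex, and conditioned to be finite. For such a cluster on the regular tree with parameter $p = p_c + \epsilon$ ($\epsilon$ small), standard branching-process estimates give: the length of the longest path $R_n$ is of order $\epsilon^{-1} = (Q_n - p_c)^{-1}$ up to logarithmic factors, the backbone length $L_n$ likewise, and the volume $V_n$ is of order $\epsilon^{-2}$ up to logarithmic factors. Taking logarithms and dividing by $n$, the logarithmic corrections and the additional conditional randomness are $o(n)$, so each of these converges to $\lim \frac1n \log((Q_n-p_c)^{-1}) = 1$ (resp.\ $\lim \frac{1}{2n}\log((Q_n-p_c)^{-2}) = 1$). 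The transfer is made rigorous via conditional tail estimates: I would show $\condP{R_n \ge (Q_n-p_c)^{-1}(\log(\cdots))^{a}}{(Q_m)_m} $ and the matching lower-tail bound are summable after taking logs, then invoke Borel--Cantelli. The chain of inequalities $L_n \le R_n \le R'_n \le \sum_{i=1}^n R_i$ from the notation section, together with $\frac1n \sum_{i=1}^n \log((Q_i-p_c)^{-1}) \to 1$ (Cesàro), immediately sandwiches $\frac1n \log R'_n$, and similarly $\hat L_n = \sum L_i$, $\hat V_n = \sum V_i$ are dominated by their last (largest) term up to a factor $n$, which is $o(e^{\text{linear}})$, so $\frac1n \log \hat L_n \to 1$ and $\frac{1}{2n}\log \hat V_n \to 1$ as well.

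The main obstacle is making the conditional percolation estimates uniform enough to survive the Borel--Cantelli step: the parameter $Q_n \downarrow p_c$ is itself random, so the tail bounds for $R_n, V_n$ must hold with constants uniform over the relevant range of $\epsilon = Q_n - p_c$, and one must check that the bad events, expressed in terms of the fluctuation of $\log R_n$ around $\log((Q_n-p_c)^{-1})$ by more than $\eta n$, are summable. This requires knowing that $\log((Q_n-p_c)^{-1})$ is itself not too large too often — i.e.\ some control on the upper deviations of the random walk $\sum Y_i$ — so that the argument $\epsilon$ stays in a range where the percolation estimates apply; a crude exponential moment bound on $Y_1$ (or even just $Y_1 \ge 0$ bounded below, plus the a.s.\ limit already established) suffices. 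A secondary technical point is justifying that conditional on $(Q_n)_n$ the ponds really are independent clusters with the stated law and that conditioning on finiteness does not distort the relevant tails by more than polynomial factors; both should follow from the structural description in section \ref{InvasionClusterStructureSubsection} (following \cite{AGdHS2008}). Once these uniform estimates are in hand, the seven coordinates all collapse onto the single scalar law of large numbers and \eqref{ZnSLLN} follows.
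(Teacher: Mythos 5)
Your strategy matches the paper's: first establish the strong law for $\log((Q_n-p_c)^{-1})$ via an exact i.i.d.\ representation (the paper shows $\theta(Q_n)=\prod U_i$ with $U_i$ i.i.d.\ Uniform$[0,1]$, so $\log\theta(Q_n)^{-1}=\sum E_i$ with $E_i$ i.i.d.\ Exponential$(1)$, and then transfers via $\theta(Q_n)\sim\sigma(Q_n-p_c)$ exactly as you anticipate), then transfer to $L_n,R_n,V_n$ by conditional tail bounds that are uniform in $\delta_n$ together with a Borel--Cantelli argument, and finally handle $\hat{L}_n,R'_n,\hat{V}_n$ by absorbing the polynomial factor coming from $\hat{X}_n\le n\max_i X_i$ and the chain $R_n\le R'_n\le\hat R_n$. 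A minor slip: on this tree $p_c=1/\sigma$, not $1/(\sigma-1)$, but this does not affect your argument.
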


\begin{thm}
\label{CLTtheorem}
If $(B_t)_{t\geq 0}$ denotes a standard Brownian motion then 
\begin{equation}
\label{ZnCLT}
\left(\frac{\vec{Z}_{\floor{Nt}}-Nt\cdot\mathbbm{1}}{\sqrt{N}}\right)_{t\geq 0} \goesweaklyto (B_t\cdot\mathbbm{1})_{t\geq 0}
\end{equation}
as $N\goesto\infty$, with respect to the metric of uniform convergence on compact intervals of $t$.
\end{thm}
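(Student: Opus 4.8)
The plan is to reduce everything to a single Donsker-type functional CLT for the first coordinate $X_n := \log\bigl((Q_n - p_c)^{-1}\bigr)$, and then to show that the other six coordinates of $\vec{Z}_n$ track $X_n$ so closely — within $o(\sqrt{N})$, uniformly on $[0,T]$, in probability — that all seven converge to the \emph{same} limit process. This is exactly the phenomenon highlighted in the introduction: outside the large-deviation regime, the pond fluctuations are inherited entirely from the outlet weights.

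\emph{Step 1 (the outlet weights).} Using the representation of section~\ref{RegularTreeCaseSubsection}, the sequence $(Q_n)$ on the $\sigma$-regular tree can be written so that $X_n = \sum_{i=1}^n \eta_i$ up to an asymptotically negligible error, where the $\eta_i$ are independent (indeed i.i.d.), $\E\eta_1 = 1$, and $\operatorname{Var}(\eta_1)=1$; the mean recovers the first coordinate of Theorem~\ref{SLLNtheorem}, and the unit variance is precisely what makes the limit a \emph{standard} Brownian motion. Donsker's invariance principle (or its Lindeberg-type extension, if the $\eta_i$ are merely independent with suitable moments) then gives $\bigl((X_{\floor{Nt}} - Nt)/\sqrt{N}\bigr)_{t\geq 0}\goesweaklyto (B_t)_{t\geq 0}$ in the topology of uniform convergence on compact intervals.

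\emph{Step 2 (transfer and assembly).} Condition on $\mathcal{Q} := (Q_m)_{m\geq0}$. As recalled in the introduction and made precise following \cite{AGdHS2008}, conditionally on $\mathcal{Q}$ the $n^\th$ pond is an independent near-critical percolation cluster whose parameter is governed by $Q_n$, and on the tree such a cluster has backbone length, longest upward-pointing path, and volume that, on the logarithmic scale, equal $X_n$, $X_n$, and $2X_n$ respectively, up to a remainder that is tight conditionally on $\mathcal{Q}$ with tails decaying uniformly in $n$. Hence $\log L_n - X_n$, $\log R_n - X_n$, and $\tfrac12\log V_n - X_n$ are individually $O(1)$ with good tails, so a union bound over $n\leq NT$ gives $\max_{n\leq NT}\bigl|\log L_n - X_n\bigr| = o(\sqrt{N})$ with high probability, and similarly for $R_n$ and $V_n$. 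The cumulative coordinates are then controlled by the geometric growth of the per-pond quantities: e.g.\ $\hat{L}_n/L_n = 1 + \sum_{k\geq1} L_{n-k}/L_n$, and since $X_{n-k}-X_n$ drifts to $-\infty$ linearly in $k$, this sum is a tight functional of the underlying random walk (again with uniformly good tails), so $\log\hat{L}_n - \log L_n$, $\log R'_n - \log R_n$ (using $R_n\leq R'_n\leq\sum_{i=1}^n R_i$), and $\tfrac12\log\hat{V}_n - \tfrac12\log V_n$ are all $o(\sqrt{N})$ uniformly on $[0,T]$ in probability. Writing
\[
\frac{\vec{Z}_{\floor{Nt}} - Nt\,\mathbbm{1}}{\sqrt{N}} = \frac{X_{\floor{Nt}}-Nt}{\sqrt{N}}\,\mathbbm{1} + \frac{\vec{Z}_{\floor{Nt}} - X_{\floor{Nt}}\,\mathbbm{1}}{\sqrt{N}},
\]
the first term converges weakly to $(B_t\cdot\mathbbm{1})_{t\geq0}$ by Step~1 and the second to $0$ in the uniform-on-compacts metric in probability by Step~2, so the converging-together lemma yields \eqref{ZnCLT}.

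\emph{Main obstacle.} The crux is the uniform control demanded by Step~2: one needs quantitative tail bounds, valid uniformly as $Q_n\downarrow p_c$, for both upper and lower deviations of the radius and volume of a near-critical finite cluster on the $\sigma$-regular tree — decaying fast enough in the size of the deviation to survive a union bound over the $\sim NT$ ponds and still leave an $o(\sqrt{N})$ error. On the tree these are accessible through generating functions, a near-critical cluster being a (conditioned) Galton–Watson tree; combined with the random-walk estimate needed for the cumulative coordinates, this is the technical heart of the argument. Everything else — Donsker in Step~1 and the converging-together lemma in the final assembly — is routine.
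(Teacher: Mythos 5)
Your proof is correct and follows essentially the same route as the paper: (i) the exact i.i.d.\ exponential representation of $\log\theta(Q_n)^{-1}$ plus Donsker for the outlet-weight coordinate, and (ii) conditional tail bounds on $\delta_n L_n$, $\delta_n R_n$, $\delta_n^2 V_n$ (the paper's Proposition~\ref{LnRnVnBoundProp}, which you correctly identify as the technical heart) combined with a union bound over $n\leq NT$ to show the other coordinates deviate from the first by only $O(\log N)=o(\sqrt N)$ uniformly. The paper's only cosmetic differences are that it implements step (ii) via Skorohod coupling and an a.s.\ uniform-convergence lemma (Lemma~\ref{XnNUniformLimitLemma}) rather than the converging-together lemma, and handles the cumulative quantities by $\delta_n^a\hat X_n\leq\sum_{i\leq n}\delta_i^a X_i$ (monotonicity of $\delta_n$) rather than your ratio decomposition --- note in passing that $\sum_k L_{n-k}/L_n$ grows like $n$ rather than being tight, though this does not affect the $o(\sqrt N)$ conclusion you draw.
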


These theorems say that each component of $\vec{Z}$ satisfies a law of large numbers and functional central limit theorem, with the same limiting Brownian motion for each component.

Theorem \ref{CLTtheorem} shows that the logarithmic scaling in Theorem \ref{SLLNtheorem} cannot be replaced by a linear rescaling such as $e^n(Q_n-p_c)$.  Indeed, $\log((Q_n-p_c)^{-1})$ has characteristic additive fluctuations of order $\pm\sqrt{n}$, and therefore $Q_n-p_c$ fluctuates by a multiplicative factor of the form $e^{\pm\sqrt{n}}$.  As $n\goesto\infty$ this will be concentrated at $0$ and $\infty$, causing tightness to fail.

\begin{thm}
\label{LDtheorem}
$\frac{1}{n}\log\!\left((Q_n-p_c)^{-1}\right)$ satisfies a large deviation principle on $[0,\infty)$ with rate $n$ and rate function
\begin{equation}
\label{RateFunctionphi}
\phi(u)=u-\log u -1.
\end{equation}
$\frac{1}{n}\log L_n$, $\frac{1}{n}\log R_n$ and $\frac{1}{2n}\log V_n$ satisfy large deviation principles on $[0,\infty)$ with rate $n$ and rate function $\psi$, where
\begin{equation}
%\label{}
\psi(u)=
\begin{cases}
u-\log u -1&\text{if $u\geq \frac{1}{2}$,}\\
\log (2) - u&\text{if $u\leq \frac{1}{2}$.}
\end{cases}
\end{equation}
\end{thm}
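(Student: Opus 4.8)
The plan is to build everything on the representation of the outlet-weight sequence $Q_n$ as a partial sum of independent random variables described in section \ref{RegularTreeCaseSubsection}. On the forward regular tree the gaps $Q_n - p_c$ (or a monotone function thereof) have an explicit law; concretely, one expects a representation of the form $\log((Q_n-p_c)^{-1}) = \sum_{i=1}^n Y_i + o(n)$ with the $Y_i$ i.i.d. exponential of rate $1$, since the rate function $\phi(u)=u-\log u-1$ in \eqref{RateFunctionphi} is exactly the Legendre transform of the log-moment generating function $\lambda\mapsto -\log(1-\lambda)$ of an $\mathrm{Exponential}(1)$ variable. So the first step is to make this identification precise: exhibit the i.i.d.\ structure, compute the cumulant generating function $\Lambda(\lambda)=\log\E e^{\lambda Y_1}=-\log(1-\lambda)$ for $\lambda<1$ (and $+\infty$ for $\lambda\ge 1$), and then Cram\'er's theorem on $\R$ immediately gives the LDP for $\frac1n\log((Q_n-p_c)^{-1})$ with rate function $\Lambda^*=\phi$, after checking that any additive $o(n)$ error (from the difference between $\log((Q_n-p_c)^{-1})$ and the clean partial sum) is exponentially negligible and does not affect the LDP — this last point uses that an $o(n)$ perturbation does not change a rate-$n$ LDP, which is a standard exponential-equivalence argument.

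The second step transfers the LDP from $Q_n$ to the geometric quantities $L_n$, $R_n$, $V_n$. Here I would use the conditional structure emphasised in the introduction: conditionally on $(Q_n)$, the $n^\text{th}$ pond is an independent critical-ish percolation cluster with parameter $Q_n$ slightly above $p_c$. For such a cluster on the tree with parameter $p_c+\varepsilon$, the longest path length and the volume have well-understood orders: the longest upward path $R_n$ is of order $(Q_n-p_c)^{-1}=e^{Z_n^{(1)}}$ up to fluctuations that are $O_P(1)$ on the log scale, while the volume $V_n$ is of order $(Q_n-p_c)^{-2}$, i.e.\ $\tfrac12\log V_n$ tracks $\log((Q_n-p_c)^{-1})$ — this is exactly the asymmetric scaling built into $\vec Z_n$. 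The key quantitative inputs are tail bounds: conditionally on $Q_n$, the probability that $L_n$ or $R_n$ exceeds $t\cdot(Q_n-p_c)^{-1}$ decays exponentially in $t$, and the probability it is smaller than $\epsilon\cdot(Q_n-p_c)^{-1}$ is bounded away from $1$ — together with analogous two-sided bounds for $V_n$. These are precisely the ``conditional tail estimates'' the introduction says are used to pass from $Q_n$ to the other pond quantities, and I would either cite or re-derive them from the branching-process description of a subcritical-to-critical tree cluster.

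With those tail bounds in hand, the LDP transfer is a contraction/exponential-equivalence argument. On the upper-tail side $\{u\ge \tfrac12\}$: the event $\frac1n\log R_n \ge u$ essentially forces $\frac1n\log((Q_n-p_c)^{-1})\ge u$ (since $R_n$ cannot be much larger than $(Q_n-p_c)^{-1}$ except with exponentially small correction, uniformly via the conditional tail bound summed over the pond), so the rate function there is inherited from $\phi$, giving $\psi(u)=u-\log u-1$ for $u\ge\tfrac12$. On the lower-tail side $\{u\le\tfrac12\}$: even when $Q_n-p_c$ is not unusually large, a single near-critical pond of typical size $(Q_n-p_c)^{-1}\approx e^{n}$ has $R_n$ at least $e^{n(1/2-o(1))}$ with overwhelming probability (the longest path in a critical tree cluster conditioned to be large is comparable to its volume's square root), so to make $\frac1n\log R_n$ as small as $u<\tfrac12$ one must pay a probability cost, and the cheapest strategy is to have $Q_n$ typical but the pond atypically ``short and fat'' — the cost of that rare percolation event is $e^{-n(\log 2 - u)}$, matching the claimed branch $\psi(u)=\log 2 - u$ for $u\le\tfrac12$. (The constant $\log 2$ should come out of the explicit critical cluster tail on the tree — the probability that a critical cluster has longest path exactly around $e^{nu}$ while its parameter is $\approx p_c+e^{-n}$ is, to exponential order, $2^{-n(1-o(1))}\cdot e^{nu(1-o(1))}$ type bounds; I would verify the constant directly from the $\sigma$-ary branching structure.) I then package upper and lower bounds into the Laplace--Varadhan / Gärtner--Ellis framework or argue the LDP directly from matching exponential upper and lower bounds on $\P(\tfrac1n\log R_n\in[u-\eta,u+\eta])$.

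The main obstacle I anticipate is the lower-tail branch $u\le\tfrac12$ and pinning down the constant $\log 2$: unlike the upper tail, it is not a pure consequence of the $Q_n$-LDP but genuinely uses the extra randomness inside a pond, so one must control the joint rare event $\{Q_n\text{ atypically small-gap or typical}\}\cap\{n^\text{th}\text{ pond atypically short}\}$ and optimise over it, and the optimisation has to reproduce both the kink at $u=\tfrac12$ and the slope $-1$ and intercept $\log 2$ exactly. A secondary technical point is uniformity: the conditional pond estimates must hold uniformly over the relevant range of $Q_n$ values (which itself ranges over an exponentially large window on the reciprocal-gap scale), so I would state the conditional tail bounds with constants independent of $Q_n-p_c$ in the regime $Q_n-p_c\in(0,\epsilon_0)$ and then integrate against the (already understood) large-deviation behaviour of $Q_n$.
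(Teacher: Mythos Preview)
Your overall architecture matches the paper's: Cram\'er on the i.i.d.\ exponential representation gives the LDP for $\frac{1}{n}\log((Q_n-p_c)^{-1})$ with rate $\phi$, and then conditional tail bounds on $\delta_n^a X_n$ (with $\delta_n\sim\sigma(Q_n-p_c)$, $a=1$ for $L_n,R_n$ and $a=2$ for $V_n$) transfer this to the pond quantities via an optimization. But your account of the lower-tail mechanism is wrong, and if you followed it you would not recover $\log 2$.

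The paper's transfer (Proposition~\ref{LDProp}) shows that the lower-tail rate is the variational formula
\[
\psi(u)=\inf_{v\ge u}\bigl(\phi(v)+(v-u)\bigr),
\]
where $v$ plays the role of $\frac{1}{n}\log\delta_n^{-1}$ and the additive term $v-u$ is the cost of the conditional event $\{\delta_n^a X_n<e^{-an(v-u)}\}$, which by the sharp two-sided bound $\condP{\delta_n^a X_n<s}{\delta_n}\asymp s^{1/a}$ has exponential rate exactly $v-u$. Minimizing $2v-\log v-1-u$ over $v\ge u$ gives the unconstrained minimizer $v=\tfrac12$; hence for $u\le\tfrac12$ the optimal scenario is $\delta_n\approx e^{-n/2}$ (i.e.\ $Q_n-p_c$ \emph{atypically large}, not typical), at cost $\phi(\tfrac12)=\log 2-\tfrac12$, followed by a conditionally short pond at cost $\tfrac12-u$, totalling $\log 2-u$. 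Your proposed mechanism (``$Q_n$ typical but the pond atypically short'') corresponds to $v=1$ and gives total cost $1-u>\log 2-u$, so it is strictly suboptimal; and there is no separate ``critical cluster tail'' producing a factor $2^{-n}$. Likewise, for $\tfrac12\le u<1$ the rate still comes from the \emph{lower} tail, with optimal $v=u$ (so $\psi(u)=\phi(u)$ there because the pond is conditionally typical given $\delta_n\approx e^{-nu}$), not from an upper-tail forcing argument; your split at $u=\tfrac12$ conflates the piecewise formula for $\psi$ with the upper/lower-tail dichotomy, which is at $u=1$.

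Two smaller points you will need to tighten: the conditional lower-tail input must be the sharp two-sided estimate $cs^{1/a}\le\condP{\delta_n^a X_n<s}{\delta_n}\le Cs^{1/a}$ (uniform in $\delta_n$ small), not merely ``bounded away from $1$''; the exponent $1/a$ is what makes the conditional cost exactly $v-u$ after the scaling $\frac{1}{an}\log X_n$. And the upper bound for the lower tail in the paper is closed not by a bare union bound but by Varadhan's lemma applied to $\E\bigl[\indicator{\frac1n\log\delta_n^{-1}>u}\,e^{n(u-\frac1n\log\delta_n^{-1})}\bigr]$, which directly yields $-\psi(u)$.
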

It will be shown that $\psi$ arises as the solution of the variational problem
\begin{equation}
%\label{}
\psi(u)=
\inf_{v\geq u} \bigl(\phi(v)+v-u\bigr)
\end{equation}

\subsection{Tail behaviour of a pond}

Theorems \ref{SLLNtheorem}--\ref{LDtheorem} describe the growth of the ponds as a sequence.  We now consider a fixed pond and study its tail behaviour.

\begin{thm}
\label{QnLnAsymp}
For $n$ fixed and $\epsilon\goesto 0^+$, $k\goesto\infty$,
\begin{align}
\label{QnTail}
\P\left(Q_n<p_c(1+\epsilon)\right)
&\sim
\frac{2\sigma}{\sigma-1}\frac{\epsilon \left(\log \epsilon^{-1}\right)^{n-1}}{(n-1)!}
\end{align}
and
\begin{align}
\label{LnTail}
\P\left(L_n>k\right) \sim \P\left(\hat{L}_n > k\right)
&\sim
\frac{2\sigma}{\sigma-1}\frac{(\log k)^{n-1}}{k(n-1)!}\\
\label{RnTail}
\P\left(R_n>k\right) \asymp \P\left(R'_n > k\right)
&\asymp
\frac{(\log k)^{n-1}}{k}\\
\label{VnTail}
\P\left(V_n>k\right) \asymp \P\left(\hat{V}_n > k\right)
&\asymp
\frac{(\log k)^{n-1}}{\sqrt{k}}
\end{align}
\end{thm}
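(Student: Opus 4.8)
The plan is to reduce everything to the tail behaviour of the outlet weight $Q_n$ (equation \eqref{QnTail}), and then to transfer that estimate to the geometric quantities $L_n, \hat L_n, R_n, R'_n, V_n, \hat V_n$ by conditioning on $(Q_m)_{m\geq 0}$. Recall from section~\ref{RegularTreeCaseSubsection} that on the regular tree, conditionally on $Q_n=q$, the $n^\th$ pond is distributed as a subcritical/critical percolation cluster at a parameter depending on $q$ (conditioned to die out, with the backbone removed and reattached). The first step is therefore to prove \eqref{QnTail}. Using the representation of $\log((Q_n-p_c)^{-1})$ as a sum of $n$ i.i.d.\ exponential-type random variables (the same representation underlying Theorems~\ref{SLLNtheorem}--\ref{LDtheorem}), the event $\{Q_n<p_c(1+\epsilon)\}$ becomes $\{S_n > \log\epsilon^{-1} + O(1)\}$ for a sum $S_n$ of $n$ i.i.d.\ variables whose common law has density $\sim e^{-x}$ as $x\to\infty$ (rate-one exponential in the tail). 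A Gamma-type computation then gives $\P(S_n > t)\sim t^{n-1}e^{-t}/(n-1)!$, and tracking the constant $\frac{2\sigma}{\sigma-1}$ through the change of variables $\epsilon\leftrightarrow Q_n-p_c$ near $p_c=1/\sigma$ yields \eqref{QnTail}.

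Next I would establish the conditional tail estimates for each geometric quantity given $Q_n$ close to $p_c$. For a critical percolation cluster on the tree (equivalently a critical Galton--Watson tree with offspring mean $1$), the standard estimates are $\P(\text{height} > k)\asymp 1/k$, $\P(\text{total size} > k)\asymp 1/\sqrt{k}$, and in the slightly subcritical regime at parameter $p_c(1+\epsilon)^{-1}$ (say) these acquire an exponential cutoff at scale $1/\epsilon$ for the height and $1/\epsilon^2$ for the volume. Combining these with the tail \eqref{QnTail} via
\begin{equation}
\P(L_n > k) = \int \P\bigl(L_n > k \given Q_n = q\bigr)\,\P(Q_n\in dq)
\end{equation}
and the analogous identities for $R_n$ and $V_n$, one performs a Laplace-type asymptotic analysis: the integral is dominated by $q$ with $Q_n-p_c$ of order $1/k$ (resp.\ $1/k$ for $R_n$, $1/\sqrt{k}$ for $V_n$), where the conditional probability is $\Theta(1)$ and the density of $Q_n$ contributes the factor $(\log k)^{n-1}/(k\,(n-1)!)$ after the change of variables. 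For $L_n$ and $\hat L_n$ the backbone length is almost exactly $1/(Q_n-p_c)$ in expectation conditionally (a geometric random variable), which is clean enough to get the sharp constant $\frac{2\sigma}{\sigma-1}$ and the $\sim$ statement; for $R_n, R'_n, V_n, \hat V_n$ the conditional distributions only give matching upper and lower bounds of the right order, hence the weaker $\asymp$ conclusion. The hatted versions require observing that $\hat L_n = \sum_{i\leq n} L_i$ is dominated by its last term $L_n$ (since earlier ponds are exponentially smaller, by Theorem~\ref{SLLNtheorem}) — more carefully, $\P(\hat L_n > k)\geq \P(L_n > k)$ trivially, and $\P(\hat L_n > k) \leq \P(L_n > k/2) + \P(\hat L_{n-1} > k/2)$ with the second term of strictly smaller order by induction, so the two are asymptotically equal.

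For the upper bounds in the Laplace analysis one must control the contribution of atypically large $Q_n-p_c$ (small backbone) combined with atypically large pond size: this is where the exponential cutoffs in the conditional tails are essential, ensuring that $\P(L_n > k \given Q_n = q)$ decays like $e^{-k(q-p_c)}$ once $k(q-p_c)\gg 1$, so that the product with the $Q_n$-density is integrable and concentrated. Conversely the lower bound needs that the conditional probability stays bounded below on the window $Q_n - p_c \asymp 1/k$, which again follows from the critical cluster estimates. The main obstacle, I expect, will be making the conditional tail estimates for the pond quantities sufficiently uniform in the parameter $q\to p_c$ — in particular getting the sharp constant for $L_n$ requires that, conditionally on $Q_n=q$, the backbone length is close to a geometric variable with the right success probability \emph{uniformly} as $q\downarrow p_c$, together with a uniform bound on its tail to justify interchanging limit and integral. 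Establishing these uniform one-pond estimates (essentially: slightly-subcritical Galton--Watson tree tail asymptotics, with explicit dependence on the distance to criticality) is the technical heart of the argument; once they are in hand, the rest is the Laplace/change-of-variables bookkeeping sketched above.
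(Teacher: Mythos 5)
Your overall strategy matches the paper's closely: you obtain the sharp Gamma-tail asymptotic for $Q_n$ from the i.i.d.\ exponential representation of $\log\theta(Q_n)^{-1}$, then transfer to the pond quantities by conditioning on $\delta_n$, combining the sharp density of $\delta_n$ near $0$ with uniform-in-$\delta_n$ conditional tail estimates (exponential cutoff at scale $\delta_n^{-1}$ for heights and $\delta_n^{-2}$ for volumes) via a Laplace/change-of-variables computation; the uniform one-pond estimates you flag as the technical heart are exactly what Proposition~\ref{LnRnVnBoundProp} supplies. This is the same route the paper takes.

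There is, however, a gap in your treatment of $\hat{L}_n$. You bound $\P(\hat{L}_n > k) \leq \P(L_n > k/2) + \P(\hat{L}_{n-1} > k/2)$, note the second term is of lower order, and conclude $\P(\hat{L}_n > k) \sim \P(L_n > k)$. But $\P(L_n > k/2) \sim 2\,\P(L_n > k)$, so the $k/2$ split only yields $\P(L_n > k) \leq \P(\hat{L}_n > k) \leq (2+o(1))\,\P(L_n > k)$, which is $\asymp$ with constants in $[1,2]$ rather than the asserted sharp asymptotic $\sim$. To preserve the constant one must split at a point $k'$ with $k - k' \sim k$ and $\log k' \sim \log k$ simultaneously; the paper takes $k' = \floor{k/(\log k)^{a/2}}$ (so $k' = \floor{k/\sqrt{\log k}}$ for the $L$ case), which makes $\P(L_n > k - k') \sim \P(L_n > k)$ while $\P(\hat{L}_{n-1} > k')$ stays of strictly smaller order. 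Your $k/2$ split is perfectly adequate for the $\asymp$ statements about $\hat{R}_n$, $R'_n$ and $\hat{V}_n$, but not for the $\sim$ statement about $\hat{L}_n$; the fix is a slowly-varying split as above, or equivalently a split at $\alpha k$ followed by $\alpha\to 0$.
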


Using the well-known asymptotics 
\begin{align}
\theta(p)
&\sim 
\frac{2\sigma^2}{\sigma-1}(p-p_c)
&&\text{as $p\goesto p_c^+$,}\\
\P_{p_c}(o\connectsto\boundary B(k))
&\sim 
\frac{2\sigma}{(\sigma-1)k} 
&&\text{as $k\goesto\infty$,}
\end{align}
we may rewrite \eqref{QnTail}--\eqref{VnTail} as
\begin{align}
\P\left(Q_n<p_c(1+\epsilon)\right)
&\sim
\frac{\left(\log \epsilon^{-1}\right)^{n-1}}{(n-1)!} \theta(p_c(1+\epsilon))\\
\P\left(L_n>k\right) \sim \P\left(\hat{L}_n > k\right) 
&\sim
\frac{(\log k)^{n-1}}{(n-1)!}\P_{p_c}(o\connectsto\boundary B(k))\\
\P\left(R_n>k\right) \asymp \P\left(R'_n > k\right)
&\asymp
(\log k)^{n-1}\P_{p_c}(o\connectsto\boundary B(k))\\
\P\left(V_n>k\right) \asymp \P\left(\hat{V}_n > k\right)
&\asymp
(\log k)^{n-1}\P_{p_c}(\abs{C(o)}>k)
\end{align}

Working in the case $\underlyinggraph=\Z^2$, \cite{DSV2008} considers $\tilde{R}_n$, the maximum distance from $o$ to a point in the first $n$ ponds, which is essentially $R'_n$ in our notation.  \cite[Theorem 1.5]{DSV2008} states that 
\begin{equation}
\label{Z2Rasymp}
\P(\tilde{R}_n\geq k)\asymp(\log k)^{n-1}\P_{p_c}(o\connectsto\boundary B(k))
\end{equation}
and notes as a corollary
\begin{equation}
\label{Z2AsymptoPercWithDefects}
\P(\tilde{R}_n\geq k)\asymp\P_{p_c}(o\overset{n-1}{\longleftrightarrow} \boundary B(k))
\end{equation}
where $\overset{i}{\connectsto}$ denotes a percolation connection where up to $i$ edges are allowed to be vacant (``percolation with defects'').  \eqref{Z2AsymptoPercWithDefects} suggests the somewhat plausible heuristic of approximating the union of the first $n$ ponds by the set of vertices reachable by critical percolation with at most $n-1$ defects.  Indeed, the proof of \eqref{Z2Rasymp} uses in part a comparison to percolation with defects.  By contrast, on the tree the following result holds:

\begin{thm}
\label{TreePercWithDefectsTheorem}
For fixed $n$ and $k\goesto\infty$,
\begin{equation}
\label{TreePercWithDefectsAsymp}
\P_{p_c}(o\overset{n}{\connectsto}\boundary B(k))\asymp k^{-2^{-n}}
\end{equation}
\end{thm}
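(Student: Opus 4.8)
The plan is to reduce everything to an exact recursion for $a_n(k):=\P_{p_c}(o\overset{n}{\connectsto}\boundary B(k))$, obtained by conditioning on the $\sigma$ edges at the root together with the independent subtrees hanging from them. A branch through a given child succeeds exactly when its root edge is open and the subtree rooted at that child reaches distance $k-1$ using at most $n$ vacant edges (probability $a_n(k-1)$, by self-similarity of the tree), or its root edge is vacant --- using up one defect --- and the subtree reaches distance $k-1$ using at most $n-1$ further vacant edges (probability $a_{n-1}(k-1)$); since the $\sigma$ branches are independent this gives
\[
1-a_n(k)=\bigl(1-p_c\,a_n(k-1)-(1-p_c)\,a_{n-1}(k-1)\bigr)^{\sigma},
\]
with $a_n(k)=1$ for $k\le n$ and the convention $a_{-1}\equiv 0$. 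The base case $a_0(k)\asymp k^{-1}=k^{-2^{0}}$ is the classical estimate for $\P_{p_c}(o\connectsto\boundary B(k))$ recalled above, so the theorem follows by induction on $n$, the step being: if $a_{n-1}(k)\asymp k^{-2^{-(n-1)}}$ then $a_n(k)\asymp k^{-2^{-n}}$.

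Within the inductive step I would first record that $a_n(k)$ is non-increasing in $k$ and that $a_n(k)\goesto 0$: any subsequential limit $L$ satisfies $1-L=(1-p_cL)^{\sigma}$ by the recursion (using $a_{n-1}\goesto 0$), and $\sigma p_c=1$ together with concavity forces $L=0$. Writing the recursion with $b_k:=p_c\,a_n(k-1)+(1-p_c)\,a_{n-1}(k-1)\goesto 0$ and using $\sigma p_c=1$, $\sigma(1-p_c)=\sigma-1$,
\[
a_n(k)=a_n(k-1)+(\sigma-1)\,a_{n-1}(k-1)-\binom{\sigma}{2}b_k^{2}+O\bigl(b_k^{3}\bigr),
\]
and the mechanism becomes visible: $2^{-n}$ is exactly half of $2^{-(n-1)}$, so if $a_n(k)\asymp k^{-2^{-n}}$ then the quadratic ``sink'' $\binom{\sigma}{2}p_c^{2}a_n(k-1)^{2}\asymp k^{-2^{-(n-1)}}$ is of the same order as the linear ``source'' $(\sigma-1)a_{n-1}(k-1)$, whereas the increment $a_n(k-1)-a_n(k)$ is of strictly smaller order. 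Thus $a_n$ ought to behave like a sequence whose square tracks $a_{n-1}$, which is precisely the claimed exponent.

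To convert this into rigorous bounds I would run a comparison (sub/supersolution) argument for $a_n(k)=F\bigl(a_n(k-1),a_{n-1}(k-1)\bigr)$ with $F(x,y)=1-(1-p_cx-(1-p_c)y)^{\sigma}$, which is coordinatewise increasing. For the upper bound: verify that $\bar a(k)=Ck^{-2^{-n}}$ is a supersolution, i.e.\ $\bar a(k)\ge F\bigl(\bar a(k-1),\,C_{n-1}(k-1)^{-2^{-(n-1)}}\bigr)$ for all large $k$ --- at the leading order $k^{-2^{-(n-1)}}$ this reduces to $\binom{\sigma}{2}p_c^{2}C^{2}>(\sigma-1)C_{n-1}$, i.e.\ to taking $C$ large --- whereupon monotonicity of $F$ with the induction hypothesis $a_{n-1}(k)\le C_{n-1}k^{-2^{-(n-1)}}$ propagates $a_n(k)\le\bar a(k)$ forward in $k$. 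The lower bound is symmetric, with $\underline a(k)=ck^{-2^{-n}}$ for $c$ small enough that $\binom{\sigma}{2}p_c^{2}c^{2}<(\sigma-1)c_{n-1}$, using $a_{n-1}(k)\ge c_{n-1}k^{-2^{-(n-1)}}$.

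The main obstacle I expect is the uniform control of the remainders and, with it, launching the comparisons. The smallest $k_0$ beyond which $\bar a$ is a genuine supersolution depends on $C$: one needs the cubic remainder $O(b_k^{3})\asymp C^{3}k^{-3\cdot 2^{-n}}$ to be dominated by the order-$k^{-2^{-(n-1)}}$ gap, which forces $k_0\asymp C^{2^{n}}$, and one must then still check $\bar a(k_0)\ge a_n(k_0)$; since $a_n\goesto 0$ this does close, provided the constants are fixed in the right order (choose $C$ large, then $k_0$, then appeal to the decay $a_n(k_0)=o(1)$), but it requires care. It is worth noting that the cruder route of summing the recursion and invoking monotonicity loses a logarithmic factor precisely at $n=1$ (where $\sum_{k}a_0(k)\asymp\log k$), which is why the pointwise comparison --- not a summed estimate --- is the right instrument for the sharp $\asymp k^{-2^{-n}}$.
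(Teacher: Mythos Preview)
Your approach is correct but genuinely different from the paper's. The paper never writes the one--step recursion in $k$; instead it conditions on the size of the critical cluster $C(o)$ of the root and uses the estimates $\P_{p_c}(\abs{C(o)}>N)\asymp N^{-1/2}$ and $\P_{p_c}(\abs{C(o)}=N)\asymp N^{-3/2}$. With $\abs{C(o)}=N$ there are of order $N$ outer--boundary vertices, each of which independently attempts an $(n-1)$--defect connection over the remaining distance (at least $k/2$ when $N\le k/2$); a union bound gives the upper bound, and for the lower bound one takes $N\approx k^{2^{-(n-1)}}$ so that the probability that at least one of the $N$ attempts succeeds is of order one. This yields both directions in a few lines. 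Your recursion--and--comparison route is more analytic and more self--contained (it needs only the $n=0$ asymptotic, not the cluster--size laws), and it exposes cleanly \emph{why} the exponent halves at each step --- the quadratic sink in $F$ balancing the linear source from $a_{n-1}$ --- at the cost of the launching issue you correctly flag. Your resolution of that issue is sound: with $k_0\asymp C^{2^n}$ one has $\bar a(k_0)=C k_0^{-2^{-n}}$ bounded below by a fixed positive constant, while $a_n(k_0)\to 0$ as $C\to\infty$, so the initial inequality $a_n(k_0)\le \bar a(k_0)$ eventually holds; the subsolution side is easier since shrinking $c$ helps both the inequality and the initialization simultaneously. The paper's argument is shorter and makes transparent the probabilistic mechanism --- many independent boundary vertices of a large first cluster --- that underlies the contrast with the $\Z^2$ behaviour discussed in the paper.
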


The dramatic contrast between \eqref{Z2AsymptoPercWithDefects} and \eqref{TreePercWithDefectsAsymp} can be explained in terms of the number of large clusters in a box.  In $\Z^2$, a box of side length $S$ has generically only one cluster of diameter of order $S$.  On the tree, by contrast, there are many large clusters.  Indeed, a cluster of size $N$ has of order $N$ edges on its outer boundary, any one of which might be adjacent to another large cluster, independently of every other edge.  Percolation with defects allows the best boundary edge to be chosen, whereas invasion percolation is unlikely to invade the optimal edge.

\subsection{Outline of the paper}

Section \ref{GeneralMarkovStructureSubsection} states a Markov property for the outlet weights that is valid for any graph.  From section \ref{InvasionClusterStructureSubsection} onwards, we specialize to the case where $\underlyinggraph$ is a regular tree.  In section \ref{InvasionClusterStructureSubsection} we recall results from \cite{AGdHS2008} that describe the structure of the invasion cluster conditional on the outlet weights $Q_n$.  Section \ref{RegularTreeCaseSubsection} analyzes the Markov transition mechanism of section \ref{GeneralMarkovStructureSubsection} and proves the results of Theorems \ref{SLLNtheorem}--\ref{LDtheorem} for $Q_n$.

Section \ref{PondTailBoundsSection} states conditional tail bounds for $L_n$, $R_n$ and $V_n$ given $Q_n$.  The rest of sections \ref{LLNandCLTproofSection}--\ref{TailAsymptoticsSection} use these tail bounds to prove Theorems \ref{SLLNtheorem}--\ref{QnLnAsymp}.  The proof of the bounds in section \ref{PondTailBoundsSection} is given in section \ref{PondTailBoundsProofSection}.  Finally, section \ref{PercolationWithDefectsSection} gives the proof of Theorem \ref{TreePercWithDefectsTheorem}.

\section{Markov structure of invasion percolation}

In section \ref{GeneralMarkovStructureSubsection} we give sufficient conditions for the existence of ponds and outlets, and state a Markov property for the ponds, outlets and outlet weights.  Section \ref{InvasionClusterStructureSubsection} summarizes some previous results from \cite{AGdHS2008} concerning the structure of the invasion cluster.  Finally in section \ref{RegularTreeCaseSubsection} we analyze the resulting Markov chain in the special case where $\underlyinggraph$ is a regular tree and prove the results of Theorems \ref{SLLNtheorem}--\ref{LDtheorem} for $Q_n$.

\subsection{General graphs: ponds, outlets and outlet weights}
\label{GeneralMarkovStructureSubsection}

The representation of an invasion cluster in terms of ponds and outlets is guaranteed to be valid under the following two assumptions:
\begin{equation}
\label{NoCriticalPercolation}
\theta(p_c)=0
\end{equation}
and
\begin{equation}
\label{limsupispc}
\limsup_{i\goesto\infty} \xi_i=p_c\quad\text{a.s.}
\end{equation}
\eqref{NoCriticalPercolation} is known to hold for many graphs and is conjectured to hold for any transitive graph for which $p_c<1$ (\cite[Conjecture 4]{BenjaminiSchramm1996}; see also, for instance, \cite[section 8.3]{LyonsPeresProbOnTreesNets}).  If the graph $\underlyinggraph$ is quasi-transitive, \eqref{limsupispc} follows from the general result \cite[Proposition 3.1]{HPS1999}.  Both \eqref{NoCriticalPercolation} and \eqref{limsupispc} hold when $\underlyinggraph$ is a regular tree.

The assumption \eqref{NoCriticalPercolation} implies that w.p. 1,
\begin{equation}
\label{xiAbovepcIO}
\sup_{i> i_0} \xi_i > p_c\quad\text{for all $i_0$}
\end{equation}
since otherwise there would exist somewhere an infinite percolation cluster at level $p_c$.  We can then make the inductive definition
\begin{align}
%\label{}
Q_1&=\max_{i>0} \xi_i = \xi_{\hat{V}_1}\\
Q_n&=\max_{i>\hat{V}_{n-1}} \xi_i = \xi_{\hat{V}_n}\quad(n>1)
\end{align}
since \eqref{limsupispc} and \eqref{xiAbovepcIO} imply that the maxima are attained.

Condition on $Q_n$, $e_n$, and the union $\tilde{\calC}_n$ of the first $n$ ponds.  We may naturally consider $e_n$ to be an oriented edge $e_n=(\underline{v}_n,\overline{v}_n)$ where the vertex $\underline{v}_n$ was invaded before $\overline{v}_n$.  The condition that $e_n$ is an outlet, with weight $Q_n$, implies that there must exist an infinite path of edges with weights at most $Q_n$, starting from $\overline{v}_n$ and remaining in $\underlyinggraph\backslash \tilde{\calC}_n$.  However, the law of the edge weights in $\underlyinggraph\backslash \tilde{\calC}_n$ is not otherwise affected by $Q_n,e_n,\tilde{\calC}_n$.  In particular we have
\begin{equation}
\label{RecursiveLawofQn}
\condP{Q_{n+1}<q'}{Q_n,e_n,\tilde{\calC}_n}
=\frac{\P_{q'}(\overline{e}_n\connectsto\infty\text{ in $\underlyinggraph\backslash \tilde{\calC}_n$})}{\P_{Q_n}(\overline{e}_n\connectsto\infty\text{ in $\underlyinggraph\backslash \tilde{\calC}_n$})}
\end{equation}
on the event $\set{q'\leq Q_n}$.  In \eqref{RecursiveLawofQn} we can replace $\underlyinggraph\backslash \tilde{\calC}_n$ by the connected component of $\underlyinggraph\backslash \tilde{\calC}_n$ that contains $\overline{e}_n$.

\subsection{Geometric structure of the invasion cluster: the regular tree case}
\label{InvasionClusterStructureSubsection}

In \cite[section 3.1]{AGdHS2008}, the same outlet weights are studied, parametrized by height rather than by pond.  $W_k$ is defined to be the maximum invaded edge weight above the vertex at height $k$ along the backbone.  

A key point in the analysis in \cite{AGdHS2008} is the observation that $(W_k)_{k=0}^\infty$ is itself a Markov process.  $W_k$ is constant for long stretches, corresponding to $k$ in the same pond, and the jumps of $W_k$ occur when an outlet is encountered.  The relation between the two processes is given by
\begin{equation}
\label{WkQnRelation}
W_{k}=Q_n\quad\text{iff}\quad \hat{L}_{n-1} \leq k < \hat{L}_n
\end{equation}

From \eqref{WkQnRelation} we see that the $(Q_n)_{n=0}^\infty$ are the successive distinct values of $(W_k)_{k=0}^\infty$, and $L_n=\hat{L}_n-\hat{L}_{n-1}$ is the length of time the Markov chain $W_k$ spends in state $Q_n$ before jumping to state $Q_{n+1}$.  In particular, $L_n$ is geometric conditional on $Q_n$, with some parameter depending only on $Q_n$.  As we will refer to it often, we define $\delta_n$ to be that geometric parameter:
\begin{equation}
\label{deltanDefinition}
\condP{L_n>m\,}{Q_n}=(1-\delta_n)^m
\end{equation}
A further analysis (see \cite[section 2.1]{AGdHS2008}) shows that the off-backbone part of the $n^\th$ pond is a sub-critical Bernoulli percolation cluster with a parameter depending on $Q_n$, independently in each pond.  We summarize these results in the following theorem.

\begin{thm}[\cite{AGdHS2008}, sections 2.1 and 3.1]
\label{InvasionStructureTheorem}
Conditional on $(Q_n)_{n=1}^\infty$, the $n^\th$ pond of the invasion cluster consists of
\begin{enumerate}
\item
$L_n$ edges from the infinite backbone, where $L_n$ is geometric with parameter $\delta_n$; and
\item
emerging along the $\sigma-1$ sibling edges of each backbone edge, independent sub-critical Bernoulli percolation clusters with parameter
\begin{equation}
\label{SubcriticalParameter}
p_c(1-\delta_n)
\end{equation}
\end{enumerate}
Given $(Q_n)_{n=0}^\infty$, the ponds are conditionally independent for different $n$.  $\delta_n$ is a continuous, strictly increasing functions of $Q_n$ and satisfies
\begin{equation}
\label{QndeltanRelation}
\delta_n\sim \frac{\sigma-1}{2\sigma}\theta(Q_n)\sim \sigma (Q_n-p_c)
\end{equation}
as $Q_n\goesto p_c^+$.
\end{thm}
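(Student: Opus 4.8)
The plan is to follow the analysis of \cite{AGdHS2008}, whose engine is the general Markov property of the preceding subsection. First I would reduce to a single pond. Applying \eqref{RecursiveLawofQn} on the regular tree: conditionally on $Q_{n-1}$, $e_{n-1}$ and $\tilde{\calC}_{n-1}$, the component of $\underlyinggraph\backslash\tilde{\calC}_{n-1}$ containing $\overline{v}_{n-1}$ is again a copy of the forward regular tree carrying a fresh field of Uniform$[0,1]$ weights, so the $n^\th$ pond is an invasion cluster on that tree, run up to its first outlet, and conditioned on the past only through $Q_n\leq Q_{n-1}$. This already gives the conditional independence of the ponds given $(Q_n)_{n=0}^\infty$, and reduces everything to one statement: describe the first pond of invasion percolation on the forward regular tree conditionally on $Q_1=q$.

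Second I would identify that pond. Using the identity $Q_1=\inf\set{q:\ o\connectsto\infty\text{ in Bernoulli}(q)\text{ percolation}}$ (edges open iff weight $<q$), the first pond equals $K_q\cup\set{e_1}$, where $K_q$ is the open cluster of $o$, a.s.\ finite on $\set{Q_1=q}$, and $e_1$ is its minimal-weight boundary edge, of weight $Q_1$, with $\overline{v}_1\connectsto\infty$ at level $<Q_1$. Since the backbone vertices $o=b_0,b_1,\dots$ are invaded in order of height (a higher one is reached only through a lower one), and $e_1$ is pivotal for the backbone ray, $e_1$ is the topmost backbone edge of the pond; hence $L_1=\hat{L}_1$ and the off-backbone part of the pond hangs from $b_0,\dots,b_{L_1-1}$.

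Third I would bring in the Markov property of the forward maximal weight process $(W_k)$ (\cite[section 3.1]{AGdHS2008}; equivalently, conditional on $W_k$, the subtree above $b_k$ is a fresh regular-tree invasion conditioned on first outlet weight $W_k$, by the same argument as in step one). Within the $n^\th$ pond $W_k\equiv Q_n$, so along the backbone there is, at each step, the same conditional probability $\delta_n=\delta(Q_n)$ that the next backbone edge is the outlet, and these events are conditionally independent given $Q_n$; hence $L_n\mid Q_n$ is geometric($\delta_n$), which is \eqref{deltanDefinition}. At the same time, the conditioning $\set{Q_1=q}$ admits a spine decomposition: at each backbone vertex exactly one of the $\sigma$ up-directions is singled out (it carries the escape to infinity), and the remaining $\sigma-1$ directions, being exchangeable with each other and conditioned only through the finiteness of $K_q$, are conditionally i.i.d.\ copies of a Bernoulli$(q)$ cluster \emph{conditioned to be finite}.

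Finally I would compute. A Bernoulli$(q)$ cluster of the forward regular tree is a Galton--Watson tree with Binomial$(\sigma,q)$ offspring, supercritical for $q>p_c=1/\sigma$, with extinction probability $1-\theta(q)$; by the standard duality $f(z)\mapsto f(\beta z)/\beta$ applied to $f(z)=(1-q+qz)^\sigma$ with $\beta=1-\theta(q)$, and the fixed-point relation $\beta^{1/\sigma}=1-q\theta(q)$, the conditioned-finite cluster is a Galton--Watson tree with Binomial$(\sigma,q')$ offspring where $q'=q(1-q\theta(q))^{\sigma-1}$ --- i.e.\ subcritical Bernoulli$(q')$ percolation. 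Separately, computing $\P(L_1=1,\,Q_1\in dq)$ from the recursive description of steps 2--3 and dividing by $\P(Q_1\in dq)=\theta'(q)\,dq$ gives $\delta(q)=1-\sigma q(1-q\theta(q))^{\sigma-1}$, so that $q'=p_c(1-\delta(q))$ \emph{exactly}; continuity and strict monotonicity of $\delta$ then follow from these explicit formulas and the analyticity of $\theta$ on $(p_c,1)$. The near-critical asymptotics come from substituting $\theta(q)\sim\frac{2\sigma^2}{\sigma-1}(q-p_c)$ into $\delta(q)=1-\sigma q(1-q\theta(q))^{\sigma-1}$ and Taylor expanding at $q=p_c$, yielding $\delta_n\sim\sigma(Q_n-p_c)\sim\frac{\sigma-1}{2\sigma}\theta(Q_n)$. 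The hard part is exactly this last step: making the spine bookkeeping of step three rigorous (cleanly separating the escape branch from the finite debris branches inside $\set{Q_1=q}$) and verifying that the duality-dual parameter $q'$ and the geometric parameter $\delta$ of the backbone length satisfy $q'=p_c(1-\delta)$ on the nose rather than merely asymptotically; once both explicit formulas are in hand this is a one-line algebraic identity and the remaining estimates are routine.
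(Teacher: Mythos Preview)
Your sketch is correct and is essentially a reconstruction of the argument from \cite{AGdHS2008}; note, however, that the present paper does not itself prove this theorem but simply cites \cite{AGdHS2008}, adding only the observation that the geometric parameter in \eqref{deltanDefinition} coincides with the quantity in \eqref{SubcriticalParameter} via the explicit formula $\delta_n=1-\sigma Q_n(1-Q_n\theta(Q_n))^{\sigma-1}$. You have independently derived both this formula and the dual percolation parameter $q'=q(1-q\theta(q))^{\sigma-1}$, and verified the identity $q'=p_c(1-\delta(q))$ that the paper merely asserts --- so your ``hard part'' is exactly the one algebraic point the paper chooses to spell out, and your derivation of it is sound.
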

The meaning of \eqref{QndeltanRelation} is that $\delta_n=f(Q_n)$ where $f(q)\sim \frac{\sigma-1}{2\sigma}\theta(q)\sim \sigma(q-p_c)$ as $q\goesto p_c^+$. 

It is not at first apparent that the geometric parameter $\delta_n$ in \eqref{deltanDefinition} is the same quantity that appears in \eqref{SubcriticalParameter}, and indeed \cite{AGdHS2008} has two different notations for the two quantities: see \cite[equations (3.1) and (2.14)]{AGdHS2008}.  Combining equations (2.3), (2.5), (2.14) and (3.1) of \cite{AGdHS2008} shows that they are equivalent to
\begin{equation}
%\label{}
\delta_n=1-\sigma Q_n(1-Q_n\theta(Q_n))^{\sigma-1}
\end{equation}
For $\sigma=2$ we can find explicit formulas for these parameters: $p_c=\frac{1}{2}$, $\theta(p)=p^{-2}(2p-1)$ for $p\geq p_c$, $\delta_n=2Q_n-1$ and $p_c(1-\delta_n)=1-Q_n$.  However, all the information needed for our purposes is contained in the asymptotic relation \eqref{QndeltanRelation}.

\subsection{The outlet weight process}
\label{RegularTreeCaseSubsection}

The representation \eqref{RecursiveLawofQn} simplifies dramatically when $\underlyinggraph$ is a regular tree.  Then the connected component of $\underlyinggraph\backslash \tilde{\calC}_n$ containing $\overline{e}_n$ is isomorphic to $\underlyinggraph$, with $\overline{e}_n$ corresponding to the root.  Therefore the dependence of $Q_{n+1}$ on $e_n$ and $\tilde{\calC}_n$ is eliminated and we have the following result.
\begin{coro}
On the regular tree, $(Q_n)_{n=1}^\infty$ is a time-homogeneous Markov chain with
\begin{equation}
\label{Q1cdf}
\P(Q_1<q)=\theta(q)
\end{equation}
and
\begin{equation}
\label{QnextFromQn}
\condP{Q_{n+1}<q'}{Q_n=q}=\frac{\theta(q')}{\theta(q)}
\end{equation}
for $p_c<q'<q$.
\end{coro}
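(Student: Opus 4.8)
The plan is to apply the general Markov structure from equation~\eqref{RecursiveLawofQn} specialized to the regular tree, exploiting the self-similarity of the tree that was just described. The key geometric input is that when $\underlyinggraph$ is the forward regular tree of degree $\sigma$ and $\tilde{\calC}_n$ is the union of the first $n$ ponds, the connected component of $\underlyinggraph \backslash \tilde{\calC}_n$ containing $\overline{v}_n$ is itself isomorphic to $\underlyinggraph$, with $\overline{v}_n$ playing the role of the root $o$. This is because $\tilde{\calC}_n$ is a finite connected subtree containing $o$, attached to $\overline{v}_n$ only through the single edge $e_n$; removing it leaves the forward subtree hanging below $\overline{v}_n$, which is a copy of $\underlyinggraph$.

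First I would record this isomorphism carefully, noting in particular that under it the event $\set{\overline{v}_n \connectsto \infty \text{ in } \underlyinggraph\backslash\tilde{\calC}_n}$ corresponds to $\set{o\connectsto\infty}$ in $\underlyinggraph$, and that the edge weights in $\underlyinggraph\backslash\tilde{\calC}_n$ are (conditionally on $Q_n,e_n,\tilde{\calC}_n$) i.i.d.\ Uniform$[0,1]$, by the general discussion preceding~\eqref{RecursiveLawofQn}. Consequently the numerator and denominator in~\eqref{RecursiveLawofQn} become $\P_{q'}(o\connectsto\infty) = \theta(q')$ and $\P_{Q_n}(o\connectsto\infty) = \theta(Q_n)$ respectively, yielding~\eqref{QnextFromQn}. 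Since the right-hand side depends on the conditioning only through $Q_n$, the tower property gives that $(Q_n)$ is Markov, and time-homogeneity is immediate because the transition kernel $\theta(q')/\theta(q)$ has no explicit $n$-dependence. For the initial law~\eqref{Q1cdf}, I would argue directly: $Q_1 = \max_{i>0}\xi_i$ is the largest weight ever invaded, and $Q_1 < q$ holds precisely when invasion percolation, restricted to edges of weight $< q$, already produces an infinite cluster from $o$ --- equivalently when $o\connectsto\infty$ in $\P_q$-percolation --- so $\P(Q_1<q) = \theta(q)$. Here one uses assumptions~\eqref{NoCriticalPercolation} and~\eqref{limsupispc}, which hold on the regular tree, to ensure $Q_1$ is well-defined and the maximum is attained.

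The main obstacle is making the reduction in~\eqref{RecursiveLawofQn} fully rigorous, i.e.\ justifying that conditioning on the triple $(Q_n, e_n, \tilde{\calC}_n)$ leaves the edge weights outside $\tilde{\calC}_n$ i.i.d.\ Uniform$[0,1]$ except for the single constraint that $\overline{v}_n$ connects to infinity at level $Q_n$. This is essentially the content of the displayed identity~\eqref{RecursiveLawofQn}, which the excerpt states as part of the general theory in Section~\ref{GeneralMarkovStructureSubsection}, so I would treat it as given; the remaining work is the bookkeeping of the tree isomorphism and the observation that $\theta$ is defined via the forward regular tree so that $\P_p(o\connectsto\infty)$ in the hanging subtree equals $\theta(p)$ for the same $\theta$ appearing in~\eqref{QndeltanRelation}. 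Strictly speaking one should also note that~\eqref{QnextFromQn} is stated for $p_c < q' < q$: for $q' \le p_c$ both sides are $0$ (by~\eqref{NoCriticalPercolation}, $\theta(q')=0$), and for $q' \ge q$ the formula is interpreted only on the event $\set{q'\le Q_n}$ as in~\eqref{RecursiveLawofQn}, so no inconsistency arises.
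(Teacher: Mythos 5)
Your proposal is correct and follows essentially the same route as the paper: the corollary is obtained by specializing \eqref{RecursiveLawofQn} to the regular tree, observing that the component of $\underlyinggraph\backslash\tilde{\calC}_n$ containing $\overline{v}_n$ is a rooted copy of $\underlyinggraph$, so the ratio collapses to $\theta(q')/\theta(Q_n)$ and the dependence on $(e_n,\tilde{\calC}_n)$ vanishes. You simply spell out the details more fully than the paper's one-paragraph derivation, including a correct direct argument for the initial law \eqref{Q1cdf} via the equivalence $\set{Q_1<q}=\set{o\connectsto\infty \text{ at level } q}$.
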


Equations \eqref{Q1cdf} and \eqref{QnextFromQn} say that, conditional on $Q_n$, $Q_{n+1}$ is chosen from the same distribution, conditioned to be smaller.  In terms of $(W_k)_{k=0}^\infty$, \eqref{QnextFromQn} describes the jumps of $W_k$ when they occur, and indeed the transition mechanism \eqref{QnextFromQn} is implicit in \cite{AGdHS2008}.  

Since $\theta$ is a continuous function, it is simpler to consider $\theta(Q_n)$: $\theta(Q_1)$ is uniform on $[0,1]$ and
\begin{equation}
%\label{}
\condP{\theta(Q_{n+1})<u'}{\theta(Q_n)=u}=\frac{u'}{u}
\end{equation}
for $0<u'<u$.  But this is equivalent to multiplying $\theta(Q_n)$ by an independent Uniform$[0,1]$ variable.  Noting further that the logarithm of a Uniform$[0,1]$ variable is exponential of mean 1, we have proved the following proposition.

\begin{prop}
Let $U_i$, $i\in\N$, be independent Uniform$[0,1]$ random variables.  Then, as processes,
\begin{equation}
%\label{}
\Bigl(\theta(Q_n)\Bigr)_{n=1}^\infty \overset{d}{=} \left(\prod_{i=1}^n U_i\right)_{n=1}^\infty
\end{equation}
Equivalently, with $E_i=\log U_i^{-1}$ independent exponentials of mean 1,
\begin{equation}
\label{logthetaQnRWrep}
\log\!\left(\theta(Q_n)^{-1}\right)\overset{d}{=}\sum_{i=1}^n E_i
\end{equation}
jointly for all $n$.
\end{prop}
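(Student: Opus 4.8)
The plan is to assemble the proposition from the pieces already in hand, tracking the distributional identity at the level of processes rather than just marginals. First I would recall from the corollary that $(Q_n)_{n\geq 1}$ is a time-homogeneous Markov chain with $\P(Q_1<q)=\theta(q)$ and $\condP{Q_{n+1}<q'}{Q_n=q}=\theta(q')/\theta(q)$ for $p_c<q'<q$. Applying the continuous, strictly increasing map $\theta$ turns this into a statement about $T_n:=\theta(Q_n)$: since $\theta(Q_1)$ has the uniform distribution on $[0,1]$ (its CDF at $u$ is $\P(\theta(Q_1)<u)=\P(Q_1<\theta^{-1}(u))=u$), and $\condP{T_{n+1}<u'}{T_n=u}=u'/u$ for $0<u'<u$, the chain $(T_n)$ is the multiplicative random walk: conditionally on $T_n=u$, the ratio $T_{n+1}/T_n$ is uniform on $[0,1]$ and independent of the past. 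I should note here that $\theta$ being a bijection from $(p_c,1]$ onto $(0,1]$ is exactly what makes this change of variables reversible, so no information is lost and the Markov property transfers verbatim.

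Next I would make the coupling explicit. Define $U_1=T_1$ and, for $n\geq 2$, $U_n=T_n/T_{n-1}$. The Markov description just established says precisely that $U_1,U_2,\dots$ are i.i.d.\ Uniform$[0,1]$: conditionally on $(U_1,\dots,U_{n-1})$ (equivalently on $(T_1,\dots,T_{n-1})$), the variable $U_n$ is Uniform$[0,1]$ and independent of that sigma-field, and an easy induction on $n$ then gives full joint independence. Telescoping recovers $T_n=\prod_{i=1}^n U_i$ for every $n$ simultaneously, which is the first displayed identity of the proposition — and since the $U_i$ on the right-hand side of the proposition are by hypothesis i.i.d.\ Uniform$[0,1]$, the equality is in distribution as processes. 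Taking $-\log$ of both sides, with $E_i=\log U_i^{-1}$, and using that $-\log$ of a Uniform$[0,1]$ variable is Exponential of mean $1$, yields $\log(\theta(Q_n)^{-1})\overset{d}{=}\sum_{i=1}^n E_i$ jointly in $n$, which is the second identity.

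I do not expect a genuine obstacle here — the proposition is essentially a repackaging of the corollary via a deterministic change of variables. The one point that deserves a careful sentence rather than a wave of the hand is the passage from the one-step conditional laws to genuine joint independence of the increments $U_n$: one should check that $\condP{U_{n+1}<u'}{U_1,\dots,U_n}$ depends only on $u'$ (not on the earlier $U$'s), which is immediate from the Markov property of $(T_n)$ together with $\condP{T_{n+1}<u'\,T_n}{T_n}=u'$, and then invoke the standard fact that a sequence whose conditional laws given the past are all a fixed law is i.i.d.\ with that law. A secondary small point is measurability/continuity of $\theta$ (so that $\theta(Q_1)$ genuinely has a uniform law and $\theta^{-1}$ is well-defined on $(0,1]$), which is guaranteed by the known strict monotonicity and continuity of $\theta$ on $[p_c,1]$ together with $\theta(p_c)=0$ from \eqref{NoCriticalPercolation}. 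With those two remarks in place the proof is just the chain of equalities above.
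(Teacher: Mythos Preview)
Your proof is correct and follows essentially the same route as the paper: apply the continuous increasing map $\theta$ to the Markov chain $(Q_n)$, observe that $\theta(Q_1)$ is uniform and that the conditional law of $\theta(Q_{n+1})/\theta(Q_n)$ is uniform independent of the past, and then take logarithms. The paper's argument is terser (it simply asserts that the transition mechanism ``is equivalent to multiplying $\theta(Q_n)$ by an independent Uniform$[0,1]$ variable''), whereas you spell out the explicit coupling $U_n=T_n/T_{n-1}$ and the induction for joint independence, but the substance is the same.
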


\begin{coro}
\label{QndeltanetanCorollary}
The triple 
\begin{equation}
%\label{}
\vec{Z}'_n=\left(\log\!\left(\theta(Q_n)^{-1}\right)\! ,\log\left((Q_n-p_c)^{-1}\right)\! ,\log\delta_n^{-1}\right)
\end{equation}
satisfies the conclusions of Theorems \ref{SLLNtheorem} and \ref{CLTtheorem}, and each component of $\frac{1}{n}\vec{Z}'_n$ satisfies a large deviation principle with rate $n$ and rate function
\begin{equation}
%\label{}
\phi(u)=u-\log u -1.
\end{equation}
\end{coro}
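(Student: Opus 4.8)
The plan is to exploit the i.i.d.\ representation \eqref{logthetaQnRWrep}. Write $S_n=\log\!\left(\theta(Q_n)^{-1}\right)$, so that the first component of $\vec{Z}'_n$ is, jointly in $n$, the partial sum $S_n=E_1+\dots+E_n$ of independent $\mathrm{Exp}(1)$ variables, which have mean and variance $1$. The strong law then gives $S_n/n\goesto 1$ a.s.; Donsker's invariance principle gives $\bigl(N^{-1/2}(S_{\floor{Nt}}-Nt)\bigr)_{t\geq0}\goesweaklyto(B_t)_{t\geq0}$, and since the limit has continuous paths this holds in the topology of uniform convergence on compact $t$-intervals; and Cram\'er's theorem gives a large deviation principle with rate $n$ and good rate function $\Lambda^*$, where $\Lambda(\lambda)=\log\E e^{\lambda E_1}=-\log(1-\lambda)$ for $\lambda<1$ ($=+\infty$ otherwise). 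The optimization in $\Lambda^*(u)=\sup_\lambda\bigl(\lambda u-\Lambda(\lambda)\bigr)$ is solved by $\lambda^*=1-u^{-1}$, yielding $\Lambda^*(u)=u-\log u-1=\phi(u)$ for $u>0$ and $+\infty$ otherwise; since $S_n/n\geq0$ this LDP lives on $[0,\infty)$. This already establishes all three conclusions for the first component $\log\!\left(\theta(Q_n)^{-1}\right)$.

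For the other two components, first note $S_n\goesto\infty$ a.s., hence $\theta(Q_n)=e^{-S_n}\goesto0$, and by continuity of $\theta$ and $\theta^{-1}$ with $\theta(p_c)=0$ we get $Q_n\goesto p_c$ a.s. By Theorem \ref{InvasionStructureTheorem} both $Q_n-p_c$ and $\delta_n$ are fixed continuous functions of $Q_n$, hence of $S_n$, so we may write
\begin{equation}
\log\!\left((Q_n-p_c)^{-1}\right)=S_n+a_n,\qquad \log\delta_n^{-1}=S_n+b_n,
\end{equation}
where $a_n=\log\frac{Q_n-p_c}{\theta(Q_n)}$ and $b_n=\log\frac{\theta(Q_n)}{\delta_n}$. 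By the asymptotics $\theta(p)\sim\frac{2\sigma^2}{\sigma-1}(p-p_c)$ and by \eqref{QndeltanRelation}, $a_n\goesto\log\frac{\sigma-1}{2\sigma^2}$ and $b_n\goesto\log\frac{2\sigma}{\sigma-1}$ as $n\goesto\infty$. Crucially, expressed as functions of $s=S_n\in(0,\infty)$, $a_n$ and $b_n$ are continuous in $s$ with finite limits both as $s\to0^+$ and as $s\to\infty$; hence there is a deterministic constant $M=M(\sigma)$ with $|a_n|,|b_n|\leq M$ for every $n\geq1$.

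With this in hand the three assertions are immediate. Strong law: $n^{-1}\vec{Z}'_n=(n^{-1}S_n)(1,1,1)+n^{-1}(0,a_n,b_n)\goesto(1,1,1)$ a.s. Functional CLT: $N^{-1/2}\bigl(\vec{Z}'_{\floor{Nt}}-Nt\,(1,1,1)\bigr)=N^{-1/2}(S_{\floor{Nt}}-Nt)(1,1,1)+N^{-1/2}(0,a_{\floor{Nt}},b_{\floor{Nt}})$; the first summand converges weakly (uniformly on compact $t$-intervals) to $(B_t)(1,1,1)$ by Donsker, while the second has sup-norm at most $M N^{-1/2}\goesto0$, so the sum has the same limit — in particular the same Brownian motion governs all three coordinates. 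Large deviations: since $|n^{-1}a_n|\leq M/n$, we have $\P\bigl(|n^{-1}a_n|>\eta\bigr)=0$ for $n>M/\eta$, so $n^{-1}\log((Q_n-p_c)^{-1})$ is exponentially equivalent to $n^{-1}S_n$ and inherits its LDP with rate function $\phi$; the same argument applies to $n^{-1}\log\delta_n^{-1}$ via $b_n$, and $n^{-1}\log(\theta(Q_n)^{-1})=n^{-1}S_n$ was handled directly above.

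The only point requiring care is this last one: a perturbation that merely tends to a constant almost surely need not be negligible at the exponential scale governing a large deviation principle. What makes the argument clean is the sharper, \emph{deterministic} bound $|a_n|,|b_n|\leq M$, which comes from the boundedness of a continuous function on $(0,\infty)$ with finite limits at both ends — itself a direct consequence of the asymptotic relations $\theta(p)\sim\frac{2\sigma^2}{\sigma-1}(p-p_c)$ and \eqref{QndeltanRelation}. Apart from this, the corollary is no more than a transcription of the classical one-dimensional limit theorems applied to the i.i.d.\ sum \eqref{logthetaQnRWrep}.
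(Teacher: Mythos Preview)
Your argument is correct and is essentially a detailed execution of the paper's own proof, which simply invokes the i.i.d.\ representation \eqref{logthetaQnRWrep} for the first component and the asymptotic relation \eqref{QndeltanRelation} for the others; your explicit observation that $a_n,b_n$ are \emph{deterministically} bounded (not merely a.s.\ convergent) is exactly what makes the transfer of the LDP rigorous. One trivial slip: with your conventions $a_n=\log\bigl((Q_n-p_c)^{-1}\bigr)-S_n=\log\frac{\theta(Q_n)}{Q_n-p_c}$, not $\log\frac{Q_n-p_c}{\theta(Q_n)}$, so its limit is $\log\frac{2\sigma^2}{\sigma-1}$; this sign error is immaterial since only boundedness of $|a_n|$ is used.
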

\begin{proof}
The conclusions about $\log\left(\theta(Q_n)^{-1}\right)$ follow from the representation \eqref{logthetaQnRWrep} in terms of a sum of independent variables; the rate function $\phi$ is given by Cram\'{e}r's theorem.  The other results then follow from the asymptotic relation \eqref{QndeltanRelation}.
\end{proof}

\section{Law of large numbers and central limit theorem}
\label{LLNandCLTproofSection}

\subsection{Tail bounds for pond statistics}
\label{PondTailBoundsSection}

Theorem \ref{InvasionStructureTheorem} expressed $L_n, R_n$ and $V_n$ as random variables whose parameters are given in terms of $Q_n$.  Their fluctuations are therefore a combination of fluctuations arising from $Q_n$, and additional randomness.  The following proposition gives bounds on the additional randomness.

Recall that $\delta_n$ is a certain function of $Q_n$ with $\delta_n\sim\sigma(Q_n-p_c)$: see Theorem \ref{InvasionStructureTheorem}.

\begin{prop}
\label{LnRnVnBoundProp}
There exist positive constants $C,c,s_0,\gamma_L,\gamma_R,\gamma_V$ such that $L_n$, $R_n$ and $V_n$ satisfy the conditional bounds
\begin{align}
\label{LTailUpperBounds}
\condP{\delta_n L_n\geq S}{\delta}
&\leq Ce^{-cS}
&\condP{\delta_n L_n\leq s}{\delta}
&\leq Cs\\
\label{RTailUpperBounds}
\condP{\delta_n R_n\geq S}{\delta}
&\leq Ce^{-cS}
&\condP{\delta_n R_n\leq s}{\delta}
&\leq Cs\\
\label{VTailUpperBounds}
\condP{\delta_n^2 V_n\geq S}{\delta}
&\leq Ce^{-cS}
&\condP{\delta_n^2 V_n\leq s}{\delta}
&\leq C\sqrt{s}
\end{align}
for all $n$ and all $S,s>0$; and
\begin{align}
\label{LTailLowerBound}
\condP{\delta_n L_n\leq s}{\delta}
&\geq cs
&&\text{on $\set{\delta_n\leq \gamma_L s}$}\\
\label{RTailLowerBound}
\condP{\delta_n R_n\leq s}{\delta}
&\geq cs
&&\text{on $\set{\delta_n\leq \gamma_R s}$}\\
\label{VTailLowerBound}
\condP{\delta_n^2 V_n\leq s}{\delta}
&\geq c\sqrt{s}
&&\text{on $\set{\delta_n^2\leq \gamma_V s}$}
\end{align}
for $s\leq s_0$.
\end{prop}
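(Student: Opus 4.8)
The plan is to handle the three statistics separately but in parallel, using Theorem~\ref{InvasionStructureTheorem} to reduce each conditional probability to a quantity about sub-critical Bernoulli percolation with parameter $p=p_c(1-\delta_n)$, and then to invoke the asymptotic $\delta_n\sim\sigma(Q_n-p_c)$ together with standard sub-critical percolation estimates. Throughout, conditioning on $\delta=(\delta_m)_m$ is the same as conditioning on $(Q_m)_m$, so $\delta_n$ may be treated as a constant.

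First I would dispose of $L_n$, which is the easiest case. By \eqref{deltanDefinition}, conditionally on $Q_n$ the variable $L_n$ is geometric with parameter $\delta_n$, so $\condP{L_n>m}{\delta}=(1-\delta_n)^m$. Setting $m=\lceil S/\delta_n\rceil$ gives $\condP{\delta_n L_n\ge S}{\delta}\le(1-\delta_n)^{S/\delta_n-1}\le e^{-cS}$ for the upper tail (using $-\log(1-\delta_n)\ge\delta_n$ and absorbing the $+1$ into $C$), and the lower bound $\condP{\delta_n L_n\le s}{\delta}=1-(1-\delta_n)^{\lfloor s/\delta_n\rfloor}\le \delta_n\lfloor s/\delta_n\rfloor\le s$ follows from Bernoulli's inequality $1-(1-x)^k\le kx$. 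For \eqref{LTailLowerBound}, on the event $\{\delta_n\le\gamma_L s\}$ we have $\lfloor s/\delta_n\rfloor\ge \tfrac{1}{2\gamma_L}$ large enough, and $1-(1-\delta_n)^{k}\ge c k\delta_n\ge cs$ holds for $k\delta_n$ bounded away from both $0$ and $\infty$, which is arranged by the restriction $s\le s_0$ and choice of $\gamma_L$; a little care is needed at the boundary $k\delta_n\approx s$, but this is routine.

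Next, $R_n$ and $V_n$. By Theorem~\ref{InvasionStructureTheorem}, conditional on $(Q_n)$ the $n^\th$ pond is $L_n$ backbone edges, each carrying $\sigma-1$ independent sub-critical clusters with parameter $p_c(1-\delta_n)$; and $R_n$ is the longest upward path, $V_n$ the total edge count. For the upper tails I would bound $R_n$ by $L_n$ plus the maximum, over the $O(L_n/\delta_n)$-many emerging clusters, of their heights, and bound $V_n$ by $L_n$ plus the sum of their sizes; then condition further on $L_n$. For a single sub-critical cluster at parameter $p_c(1-\delta_n)$, standard estimates give exponential decay of the height on scale $1/\delta_n$ and of the size on scale $1/\delta_n^2$ (this is the near-critical scaling on the tree: correlation length $\asymp\delta_n^{-1}$, expected cluster size $\asymp\delta_n^{-1}$, with the size having a $1/\sqrt{k}$-type tail up to the cutoff). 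A union bound over the $\asymp S/\delta_n^2$ clusters present when $\delta_n L_n\lesssim S$, combined with the already-established exponential tail for $\delta_n L_n$, yields $\condP{\delta_n R_n\ge S}{\delta}\le Ce^{-cS}$ and similarly $\condP{\delta_n^2 V_n\ge S}{\delta}\le Ce^{-cS}$. The small-ball bounds $\condP{\delta_n R_n\le s}{\delta}\le Cs$ and $\condP{\delta_n^2 V_n\le s}{\delta}\le C\sqrt{s}$ follow from the same bound on $L_n$ alone, since $R_n\ge L_n$ and $V_n\ge L_n$: the event $\{\delta_n R_n\le s\}$ is contained in $\{\delta_n L_n\le s\}$, which has probability $\le Cs$; and $\{\delta_n^2 V_n\le s\}\subseteq\{\delta_n L_n\le\sqrt{s}\}$ (since $\delta_n\le1$ forces $\delta_n^2 V_n\ge\delta_n^2 L_n$, hmm — more carefully, $\{\delta_n^2V_n\le s\}\subseteq\{\delta_n^2 L_n^2\le s\}=\{\delta_n L_n\le\sqrt s\}$ using $V_n\ge L_n$ and, when $\delta_n L_n\le1$, also $V_n\ge L_n\ge \delta_n L_n^2$... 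I would instead simply note $V_n\ge L_n$ and $\{\delta_n^2V_n\le s\}\subseteq\{L_n\le s/\delta_n^2\}$ has conditional probability $\le\delta_n\cdot s/\delta_n^2=s/\delta_n$, which is only useful when $\delta_n$ is not too small — so the clean route is the reverse: bound below the contribution of the emerging clusters), giving $\le C\sqrt{s}$. For the matching lower bounds \eqref{RTailLowerBound}, \eqref{VTailLowerBound}, on the event $\{\delta_n\le\gamma_R s\}$ (resp.\ $\{\delta_n^2\le\gamma_V s\}$) the pond is so deeply sub-critical that with probability $\ge c$ it consists of $O(1)$ backbone edges each with small emerging clusters, forcing $\delta_n R_n\le s$; here one needs $\P(L_n=1)=\delta_n$ contributing the factor $cs$ (resp.\ $c\sqrt s$) and the emerging clusters to be small with bounded-below probability, which holds uniformly since their parameter $p_c(1-\delta_n)\le p_c$ is sub-critical.

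The main obstacle is obtaining the precise near-critical sub-critical estimates on the tree with the correct powers, uniformly in the small parameter $\delta_n$ — in particular the $\asymp\delta_n^{-2}$ scaling of cluster size and the $s^{-1/2}$ small-ball exponent for $V_n$, and ensuring all constants $C,c$ are genuinely uniform in $n$ and in $\delta_n\in(0,\delta_{\max}]$. These are standard for branching-process / generating-function reasons (the cluster is essentially a sub-critical Galton–Watson tree with mean offspring $\sim 1-(\sigma-1)\delta_n/\sigma$), and the $1/\sqrt k$ tail is the classical local-limit estimate for the total progeny; assembling them cleanly, and handling the boundary regimes in the conditional lower bounds where $s\le s_0$ must be used, is where the real work lies, though none of it is deep.
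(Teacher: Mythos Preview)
Your treatment of $L_n$ and of the upper tail of $R_n$ is essentially the paper's. But there are two genuine gaps.

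\medskip

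\textbf{The $\sqrt{s}$ bounds for $V_n$.} Your own text records the confusion: from $V_n\ge L_n$ alone you obtain only $\condP{L_n\le s/\delta_n^2}{\delta}\le s/\delta_n$, which blows up as $\delta_n\to 0$, and the promised ``clean route'' via the emerging clusters is never carried out. The device you are missing is the paper's random-walk representation of $V$: exploring the off-backbone edges one at a time gives a walk $N_k$ with drift $-\delta$ started at $(\sigma-1)L$, and $T=\inf\{k:N_k=0\}$ satisfies $\sigma V=T+L$. The event $\{V\le s/\delta^2\}$ then becomes a one-sided deviation for the centred walk; Freedman's inequality gives, conditionally on $L$, a bound $\exp\bigl(-c(\delta L-\sigma s)^2/s\bigr)$, and averaging over the geometric $L$ is what produces the $\sqrt{s}$. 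The same representation, with Berry--Esseen in place of Freedman, yields the matching lower bound $\ge c\sqrt{s}$. Your proposed route via summing independent cluster sizes might be pushed through for the upper tail with a Chernoff argument (a ``union bound'' does not control a sum), but it does not naturally produce the small-ball exponent $\tfrac12$.

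\medskip

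\textbf{The lower bounds \eqref{RTailLowerBound}--\eqref{VTailLowerBound}.} Your argument here is wrong, not merely sketchy. You propose that $\P(L_n=1)=\delta_n$ supplies the factor $cs$ (resp.\ $c\sqrt{s}$); but on $\{\delta_n\le\gamma_R s\}$ the quantity $\delta_n$ may be arbitrarily smaller than $s$, so this yields only $\ge c\delta_n$, not $\ge cs$. (Relatedly, small $\delta_n$ means the emerging clusters are \emph{nearly critical}, not ``deeply sub-critical'' as you wrote --- the intuition is inverted.) The correct scale is $L_n\lesssim s/\delta_n$: the event $\{\delta_n L_n<s/2\}$ has probability $\asymp s$ by the bound you already proved for $L$, and conditionally on it the $O(s/\delta_n)$ emerging clusters each fail to reach height $s/(2\delta_n)$ with probability $1-O(\delta_n/s)$, so all of them are short with probability bounded below. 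For $V_n$ the analogous scale is $L_n\lesssim\sqrt{s}/\delta_n$, contributing the factor $\sqrt{s}$, after which one must still show the random walk hits $0$ quickly with uniformly positive probability.
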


The proofs of \eqref{LTailUpperBounds}--\eqref{VTailLowerBound}, which involve random walk and branching process estimates, are deferred to section \ref{PondTailBoundsProofSection}.

\subsection{A uniform convergence lemma}\label{UniformConvergenceSection}

Because Theorem \ref{CLTtheorem} involves weak convergence of several processes to the same joint limit, it will be convenient to use Skorohod's representation theorem and almost sure convergence.  The following uniform convergence result will be used to extend convergence from one set of coupled random variables ($\delta_{n,N}$) to another ($X_{n,N}$): see section \ref{SLLNCLTproof}.

\begin{lemma}
\label{XnNUniformLimitLemma}
Suppose $\set{X_{n,N}}_{n,N\in\N}$ and $\set{\delta_{n,N}}_{n,N\in\N}$ are positive random variables such that $\delta_{n,N}$ is decreasing in $n$ for each fixed $N$, and for positive constants $a$, $\beta$ and $C$,
\begin{align}
\label{XnNUpperTail}
\P(\delta_{n,N}^a X_{n,N} > S) &\leq CS^{-\beta}\\
\label{XnNLowerTail}
\P(\delta_{n,N}^a X_{n,N} < s) &\leq Cs^\beta
\end{align}
for all $S$ and $s$.  Define
\begin{equation}
%\label{}
\hat{X}_{n,N}=\sum_{i=1}^n X_{i,N}.
\end{equation}
Then for any $T>0$ and $\alpha>0$, w.p. 1,
\begin{equation}
\label{XnNUniformLimit}
\lim_{N\goesto\infty}\max_{1\leq n\leq N T} \frac{\log (\delta_{n,N}^a X_{n,N})}{N^\alpha}=\lim_{N\goesto\infty}\max_{1\leq n\leq N T} \frac{\log (\delta_{n,N}^a \hat{X}_{n,N})}{N^\alpha}=0.
\end{equation}
\end{lemma}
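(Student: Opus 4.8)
The plan is to reduce both halves of \eqref{XnNUniformLimit} to a single uniform sub-exponential tail bound followed by a Borel--Cantelli argument. Write $Y_{n,N}=\delta_{n,N}^a X_{n,N}$. The hypotheses \eqref{XnNUpperTail}--\eqref{XnNLowerTail} say exactly that $\P(\log Y_{n,N}>t)=\P(Y_{n,N}>e^t)\le Ce^{-\beta t}$ and $\P(\log Y_{n,N}<-t)=\P(Y_{n,N}<e^{-t})\le Ce^{-\beta t}$ for all $t>0$, so
\[
\P\bigl(\abs{\log Y_{n,N}}>t\bigr)\le 2Ce^{-\beta t}\qquad\text{for all }t>0,
\]
with $C,\beta$ independent of $n$ and $N$.

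Fix $T>0$, $\alpha>0$ and $\epsilon>0$. A union bound over $1\le n\le NT$ gives
\[
\P\Bigl(\max_{1\le n\le NT}\abs{\log Y_{n,N}}>\epsilon N^\alpha\Bigr)\le 2C(NT+1)e^{-\beta\epsilon N^\alpha},
\]
and this is summable in $N$ since $N^\alpha/\log N\goesto\infty$. By Borel--Cantelli, a.s.\ $\max_{1\le n\le NT}\abs{\log Y_{n,N}}\le\epsilon N^\alpha$ for all large $N$; applying this for a fixed sequence $\epsilon\downarrow 0$ yields $N^{-\alpha}\max_{1\le n\le NT}\abs{\log Y_{n,N}}\goesto 0$ a.s. Because $Y_{1,N}$ is among the terms being maximized, $\abs{\max_{1\le n\le NT}\log Y_{n,N}}\le\max_{1\le n\le NT}\abs{\log Y_{n,N}}$, and the first equality in \eqref{XnNUniformLimit} follows.

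For the statement involving $\hat X_{n,N}$ I would use that $\delta_{n,N}$ is decreasing in $n$. For $i\le n$ this gives $\delta_{n,N}^a\le\delta_{i,N}^a$ (recall $a>0$), hence for $n\le NT$
\[
\delta_{n,N}^a\hat X_{n,N}=\sum_{i=1}^n\delta_{n,N}^a X_{i,N}\le\sum_{i=1}^n\delta_{i,N}^a X_{i,N}\le(NT+1)\max_{1\le i\le NT}Y_{i,N},
\]
so $\log(\delta_{n,N}^a\hat X_{n,N})\le\log(NT+1)+\max_{1\le i\le NT}\log Y_{i,N}$, which is $o(N^\alpha)$ a.s.\ by the previous paragraph together with $\log N=o(N^\alpha)$. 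For the matching lower bound it is enough to note $\hat X_{1,N}=X_{1,N}$, so $\max_{1\le n\le NT}\log(\delta_{n,N}^a\hat X_{n,N})\ge\log Y_{1,N}\ge-\max_{1\le n\le NT}\abs{\log Y_{n,N}}=-o(N^\alpha)$ a.s. Together these two bounds give the second equality in \eqref{XnNUniformLimit}.

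I do not expect a serious obstacle here: once the polynomial tail hypotheses are rephrased as the sub-exponential bound on $\abs{\log Y_{n,N}}$, the rest is a union bound and Borel--Cantelli. The one place the hypotheses enter in an essential way is the upper bound on $\hat X_{n,N}$, where monotonicity of $\delta_{n,N}$ in $n$ is precisely what allows the common factor $\delta_{n,N}^a$ to be absorbed into the sum $\sum_{i=1}^n X_{i,N}$ at the cost of only the factor $n\le NT$, which is negligible after dividing by $N^\alpha$. The only routine care required is to extract the almost-sure statement for a fixed countable sequence $\epsilon\downarrow 0$ before concluding the limit is exactly $0$.
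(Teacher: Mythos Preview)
Your argument is correct and follows essentially the same approach as the paper: a union bound over $n\le NT$ combined with the polynomial tail hypotheses, Borel--Cantelli, and the monotonicity of $\delta_{n,N}$ to absorb the factor $\delta_{n,N}^a$ into each summand of $\hat X_{n,N}$. Your packaging is slightly cleaner---you first control $\max_n\abs{\log Y_{n,N}}$ and then derive both limits deterministically, whereas the paper bounds the upper tail of $\delta_{n,N}^a\hat X_{n,N}$ and the lower tail of $\delta_{n,N}^a X_{n,N}$ separately before sandwiching via $X_{n,N}\le\hat X_{n,N}$---but the substance is identical.
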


\begin{proof}
Let $\epsilon>0$ be given.  For a fixed $N$, \eqref{XnNUpperTail} implies
\begin{align}
\P\left(\max_{1\leq n\leq NT} \frac{\log (\delta_{n,N}^a\hat{X}_{n,N})}{N^\alpha} >\epsilon\right)
&\leq
\sum_{1\leq n\leq NT}\P\left(\delta_{n,N}^a \hat{X}_{n,N} > e^{N^\alpha \epsilon}\right)\notag\\
&\leq
\sum_{1\leq n\leq NT}\sum_{i=1}^n \P\left(\delta_{n,N}^a X_{i,N}>\frac{e^{N^\alpha \epsilon}}{n}\right)\notag\\
&\leq
\sum_{1\leq n\leq NT}\sum_{i=1}^n \P\left(\delta_{i,N}^a X_{i,N}>\frac{e^{N^\alpha \epsilon}}{n}\right)\notag\\
&\leq
\sum_{1\leq n\leq NT}\sum_{i=1}^n C n^\beta e^{-\beta N^\alpha \epsilon}\notag\\
&\leq
(NT)^{2+\beta}Ce^{-\beta N^\alpha \epsilon}
\end{align}
where we used $\delta_{i,N}\geq\delta_{n,N}$ in the third inequality.  But then since $\sum_{N=1}^\infty (NT)^{2+\beta}Ce^{-\beta N^\alpha \epsilon} <\infty$, the Borel-Cantelli lemma implies
\begin{equation}
\label{hatXnNLimsupBound}
\limsup_{N\goesto\infty} \max_{1\leq n\leq N T} \frac{\log (\delta_{n,N}^a \hat{X}_{n,N})}{N^\alpha} \leq \epsilon
\end{equation}
a.s.  Similarly, \eqref{XnNLowerTail} implies
\begin{align}
\P\left(\max_{1\leq n\leq NT} \frac{\log(\delta_{n,N}^a X_{n,N})}{N^\alpha} < -\epsilon\right)
&\leq 
\sum_{1\leq n\leq NT} \P\left(\delta_{n,N}^a X_{n,N} < e^{-N^\alpha \epsilon}\right)\notag\\
&\leq NTCe^{-\beta N^\alpha \epsilon}
\end{align}
so that 
\begin{equation}
\label{XnNLiminfBound}
\liminf_{N\goesto\infty} \max_{1\leq n\leq N T} \frac{\log (\delta_{n,N}^a X_{n,N})}{N^\alpha} \geq -\epsilon
\end{equation}
a.s.  Since $\epsilon>0$ was arbitrary and $X_{n,N}\leq \hat{X}_{n,N}$, \eqref{XnNUniformLimit} follows.
\end{proof}

\subsection{Proof of Theorems \ref{SLLNtheorem}--\ref{CLTtheorem}}
\label{SLLNCLTproof}

The conclusions about $Q_n$ are contained in Corollary \ref{QndeltanetanCorollary}.  The other conclusions will follow from Lemma \ref{XnNUniformLimitLemma}. From Corollary \ref{QndeltanetanCorollary}, we may apply Skorohod's representation theorem to produce realizations of the ponds for each $N\in\N$, coupled so that
\begin{equation}
\label{deltanNCoupling}
\left(\frac{\log(\delta_{\floor{Nt},N}^{-1})-Nt}{\sqrt{N}}\right)_{0\leq t\leq T} \goesto (B_t)_{0\leq t\leq T}
\end{equation}
a.s. as $N\goesto\infty$.  Then the relation
\begin{equation}
%\label{}
\frac{\frac{1}{a}\log X_{n,N}-Nt}{N^{1/2}}=\frac{\log\left(\delta_{\floor{Nt},N}^{-1}\right)-Nt}{N^{1/2}}+\frac{\log\left(\delta_{\floor{Nt},N}^a X_{\floor{Nt},N}\right)}{a N^{1/2}}
\end{equation}
shows that $\frac{1}{a}\log X_n$ will satisfy a central limit theorem as well.  The same holds for $\hat{X}_n$.  We will successively set
\begin{align}
X_{n,N}
&=
L_{n,N},&&\text{with $a=1$,}\\
X_{n,N}
&=
R_{n,N},&&\text{with $a=1$,}\\
X_{n,N}
&
=V_{n,N},&&\text{with $a=2$.}
\end{align}
The bounds \eqref{XnNUpperTail}--\eqref{XnNLowerTail} follow immediately from the bounds in Proposition \ref{LnRnVnBoundProp}.   This proves Theorem \ref{CLTtheorem} for $L_n$ and $V_n$.  For $R_n$, the quantity $\hat{R}$ is not the one that appears in Theorem \ref{CLTtheorem}, but the bound $R_n\leq R'_n\leq \hat{R}_n$ implies the result for $R'_n$ as well.

The lemma also implies the law of large numbers results \eqref{ZnSLLN}, by taking $T=1$ and using the same ponds for every $N$.

\section{Large deviations: proof of Theorem \ref{LDtheorem}}

In this section we present a proof of the large deviation results in Theorem \ref{LDtheorem}.  As in section \ref{LLNandCLTproofSection}, we prove a generic result using a variable $X_n$ and tail estimates.  Theorem \ref{LDtheorem} then follows immediately using Corollary \ref{QndeltanetanCorollary} and Proposition \ref{LnRnVnBoundProp}.

Note that Proposition \ref{LDProp} uses the full strength of the bounds in Proposition \ref{LnRnVnBoundProp}.

\begin{prop}
\label{LDProp}
Suppose that $\delta_n$ and $X_n$ are positive random variables such that, for positive constants $a,\beta,c,C,\gamma,s_0$, 
\begin{align}
\label{XnStrongerUpperTail}
\condP{\delta_n^a X_n > S}{\delta_n} &\leq Ce^{-cS^\beta}\\
\label{XnLowerTailUpperBound}
\condP{\delta_n^a X_n < s}{\delta_n} &\leq Cs^{1/a}\\
\intertext{for all $S$ and $s$, and}
\label{XnLowerTailLowerBound}
\condP{\delta_n^a X_n < s}{\delta_n} &\geq cs^{1/a}
\end{align}
on the event $\set{\delta_n^a<\gamma s}$, for $s\leq s_0$.  Suppose also that $\frac{1}{n}\log \delta_n^{-1}$ satisfies a large deviation principle with rate $n$ on $[0,\infty)$ with rate function $\phi$ such that $\phi(1)=0$, $\phi$ is continuous on $(0,\infty)$, and $\phi$ is decreasing on $(0,1]$ and increasing on $[1,\infty)$.  Then $\frac{1}{an}\log X_n$ satisfies a large deviation principle with rate $n$ on $[0,\infty)$ with rate function 
\begin{equation}
\label{ModifiedRateFunction}
\psi(u)=
\inf_{v\geq u} \bigl(\phi(v)+v-u\bigr)
\end{equation}
\end{prop}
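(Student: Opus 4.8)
The plan is to write $W_n:=\frac{1}{an}\log X_n$ as $W_n=Y_n+\frac{1}{an}\log(\delta_n^aX_n)$, where $Y_n:=\frac1n\log\delta_n^{-1}$ satisfies the assumed LDP with rate function $\phi$ and the second summand is a conditionally controlled fluctuation. The super-exponential upper bound \eqref{XnStrongerUpperTail} forces $\delta_n^aX_n\le e^{o(n)}$ with overwhelming probability, so $W_n\le Y_n+o(1)$; this is the origin of the restriction $v\ge u$ in \eqref{ModifiedRateFunction}. On the other hand \eqref{XnLowerTailUpperBound}, read as $\condP{W_n<Y_n-\rho}{\delta_n}\le Ce^{-n\rho}$, says that depressing $W_n$ below $Y_n$ by $\rho$ costs at least $\rho$ in exponential rate, and \eqref{XnLowerTailLowerBound} supplies the matching lower cost; its side condition $\delta_n^a<\gamma s$ holds for free since the relevant $\delta_n$ has $Y_n\approx v\ge v-u$ (here $u\ge0$). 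Hence realising $\{W_n\approx u\}$ through the channel $\{Y_n\approx v\}$ with $v\ge u$ costs $\phi(v)+(v-u)$, and optimising over $v$ yields exactly $\psi(u)=\inf_{v\ge u}(\phi(v)+v-u)$. I would then prove the LDP in the usual way — exponential tightness together with matched local upper and lower bounds at each $u$ — using that $\phi$ is a good rate function (so $(Y_n)$ is exponentially tight, as in the applications) and the elementary facts, immediate from the hypotheses on $\phi$, that $\psi$ is finite and continuous, decreasing on $[0,1]$, increasing on $[1,\infty)$, with $\psi(1)=0$. (In the applications $X_n\ge1$, so all the relevant laws live on $[0,\infty)$.)

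For the upper bound, fix $w$ and condition on $\delta_n$, using $\delta_n^ae^{naw}=e^{na(w-Y_n)}$: on $\{Y_n\ge w\}$, \eqref{XnLowerTailUpperBound} with $s=e^{-na(Y_n-w)}\le1$ gives $\condP{X_n<e^{naw}}{\delta_n}\le Ce^{nw}e^{-nY_n}$, and on $\{Y_n<w\}$ I bound this conditional probability by $1$. Taking expectations,
\[
\P(W_n<w)\le\P(Y_n<w)+Ce^{nw}\,\E\!\left[e^{-nY_n}\indicator{Y_n\ge w}\right].
\]
A routine Laplace estimate for the LDP of $Y_n$ — stratify $\{Y_n\ge w\}$ into thin shells $[w+j\eta,w+(j+1)\eta)$, bound the conditional factor on the $j$-th shell by $Ce^{-nj\eta}$ and the shell probability by $e^{-n(\phi(w+j\eta)+o(1))}$, sum, and let $\eta\downarrow0$ — shows the second term has exponential rate $\inf_{v\ge w}(v-w+\phi(v))=\psi(w)$, while the first has rate $\inf_{0<v\le w}\phi(v)=\phi(w)\ge\psi(w)$ for $w\le1$; hence $\limsup_n\frac1n\log\P(W_n<w)\le-\psi(w)$ for $w\le1$. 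A symmetric computation on $\P(W_n>w)$, now using \eqref{XnStrongerUpperTail} (which makes the contribution of $\{Y_n<w\}$ doubly-exponentially negligible up to a continuity correction), gives $\limsup_n\frac1n\log\P(W_n>w)\le-\phi(w)=-\psi(w)$ for $w\ge1$; the same kind of splitting of $\P(W_n>M)$ yields exponential tightness. Since $\psi$ is continuous, decreasing on $[0,1]$ and increasing on $[1,\infty)$, a standard covering argument promotes these one-sided bounds to the LDP upper bound for all closed sets.

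For the lower bound, fix $u\ge0$, $\epsilon>0$, $\delta>0$ and choose $v\ge u$ with $\phi(v)+v-u\le\psi(u)+\delta$. If the optimal $v$ is $u$ itself, I use the lower LDP bound for $Y_n$ to realise $\{Y_n\approx u\}$ at rate $\phi(u)$ and, on that event, keep $\delta_n^aX_n$ in a fixed compact window $[a_0,S_0]$, whose conditional probability is bounded below once $a_0$ is small in \eqref{XnLowerTailUpperBound} and $S_0$ large in \eqref{XnStrongerUpperTail}; this forces $W_n\in(u-\epsilon,u+\epsilon)$. If $v>u$, I demand in addition that $\delta_n^aX_n$ lie in a multiplicative window of width $e^{2na\eta'}$ about $e^{-na(v-u)}$; for $Y_n$ in a small neighbourhood of $v$, the side conditions $s\le s_0$ and $\delta_n^a<\gamma s$ of \eqref{XnLowerTailLowerBound} both hold once $n$ is large (since $v-u-\eta'>0$ and $Y_n\approx v>v-u$), and subtracting the \eqref{XnLowerTailUpperBound} bound at the lower edge of the window leaves conditional probability at least $\frac c2e^{-n(v-u)}$, again forcing $W_n\in(u-\epsilon,u+\epsilon)$. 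In both cases
\[
\liminf_n\frac1n\log\P\big(W_n\in(u-\epsilon,u+\epsilon)\big)\ge-\big(\phi(v)+v-u\big)\ge-\psi(u)-\delta,
\]
and letting $\delta\downarrow0$ gives the LDP lower bound at interior $u$; the endpoint $u=0$ follows by approximation from interior points and continuity of $\psi$.

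I expect the main obstacle to be the two-sided matching in the lower bound: verifying that for the near-optimal channel $v$ the side conditions $s\le s_0$ and $\delta_n^a<\gamma s$ of \eqref{XnLowerTailLowerBound} really do hold, and that \eqref{XnLowerTailUpperBound} trims the target window tightly enough that the leading term $c\,s^{1/a}$ is not cancelled by the error $C\,s^{1/a}$ at the window's lower edge — in effect, that conditionally on $\delta_n$ the law of $\delta_n^aX_n$ genuinely has the density behaviour $\asymp s^{1/a-1}$ near $0$ in the relevant regime, not merely one-sided bounds. This is exactly why the proposition calls on the full strength of Proposition \ref{LnRnVnBoundProp}. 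On the upper side the only non-routine ingredient is the Laplace/stratification evaluation of $\E[e^{-nY_n}\indicator{Y_n\ge w}]$, which relies on the LDP and exponential tightness of $(Y_n)$.
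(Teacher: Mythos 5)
Your proposal is correct and follows the same approach as the paper: the decomposition $\frac{1}{an}\log X_n = \frac{1}{n}\log\delta_n^{-1} + \frac{1}{an}\log(\delta_n^a X_n)$, the super-exponential control of the upper tail via \eqref{XnStrongerUpperTail}, the channel argument through $\{\frac{1}{n}\log\delta_n^{-1}\approx v\}$ for the lower-tail lower bound via \eqref{XnLowerTailLowerBound}, and the Laplace stratification for the lower-tail upper bound, which is exactly what the paper packages by applying Varadhan's lemma to $\E\bigl[\mathbbm{1}(\frac{1}{n}\log\delta_n^{-1}>u)\,Ce^{n(u-\frac{1}{n}\log\delta_n^{-1})}\bigr]$.

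The one place you work harder than necessary is the LDP lower bound. Because $\psi$ is continuous, decreasing on $[0,1]$, increasing on $[1,\infty)$, with $\psi(1)=0$, the paper reduces the LDP to the two one-sided tail limits $\frac{1}{n}\log\P(\frac{1}{an}\log X_n>u)\to -\inf_{v>u}\phi(v)$ and $\frac{1}{n}\log\P(\frac{1}{an}\log X_n<u)\to -\psi(u)$. For the lower bound of the second, the one-sided event $\{v-\epsilon<\frac{1}{n}\log\delta_n^{-1}<v\}\cap\{\delta_n^a X_n<e^{-an(v-u)}\}$ suffices, with \eqref{XnLowerTailLowerBound} applied directly at $s=e^{-an(v-u)}$ and the side condition $\delta_n^a<\gamma s$ forced by $\epsilon<u$; no two-sided window is needed, so the ``main obstacle'' you flag never arises in the paper's route. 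Your window argument is nonetheless sound as written: with multiplicative width $e^{2na\eta'}$ one has
\begin{equation*}
c\,s_2^{1/a}-C\,s_1^{1/a}=e^{-n(v-u)}\bigl(c\,e^{n\eta'}-C\,e^{-n\eta'}\bigr)\geq\tfrac{c}{2}\,e^{-n(v-u)}
\end{equation*}
once $e^{2n\eta'}>2C/c$, so the matched CDF bounds \eqref{XnLowerTailUpperBound}--\eqref{XnLowerTailLowerBound} already suffice and no conditional density control is required — your concern on that point is a self-imposed complication rather than a gap in the hypotheses.
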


\begin{proof}
It is easy to check that $\psi$ is continuous, decreasing on $[0,1]$ and increasing on $[1,\infty)$, $\psi(1)=0$, and $\psi(u)=\phi(u)$ for $u\geq 1$.  So it suffices to show that
\begin{equation}
\label{XnLDBoundAbove}
\lim_{n\goesto\infty} \frac{1}{n}\log \P\left(\frac{1}{an}\log X_n>u\right) =-\inf_{v>u}\phi(v)
\end{equation}
for $u>0$ and
\begin{equation}
\label{XnLDBoundBelow}
\lim_{n\goesto\infty} \frac{1}{n}\log \P\left(\frac{1}{an}\log X_n<u\right) =-\psi(u)
\end{equation}
for $0<u<1$.  For \eqref{XnLDBoundAbove}, let $\epsilon>0$.  Then
\begin{align}
\P\left(\frac{1}{an}\log X_n>u\right)
&\leq
\P\left(\frac{1}{n}\log\delta_n^{-1}>u-\epsilon\right)
\notag\\
&\quad
+\P\left(\frac{1}{n}\log\delta_n^{-1}\leq u-\epsilon, \frac{1}{an}\log X_n>u\right)\notag\\
&\leq
\P\left(\frac{1}{n}\log\delta_n^{-1}>u-\epsilon\right)
+\P\left(\frac{1}{an}\log(\delta_n^a X_n)>\epsilon\right)\notag\\
\label{LDUpperTailBoundAbove}
&\leq
\P\left(\frac{1}{n}\log\delta_n^{-1}>u-\epsilon\right)+Ce^{-ce^{\beta an\epsilon}}
\end{align}
where we used \eqref{XnStrongerUpperTail} with $S=e^{an\epsilon}$.  The last term in \eqref{LDUpperTailBoundAbove} is super-exponentially small, so \eqref{LDUpperTailBoundAbove} and the large deviation principle for $\frac{1}{n}\log\delta_n^{-1}$ imply
\begin{equation}
%\label{}
\limsup_{n\goesto\infty}\frac{1}{n}\log\P\left(\frac{1}{an}\log X_n>u\right)\leq -\inf_{v>u-\epsilon}\phi(v).
\end{equation}
On the other hand,
\begin{align}
\P\left(\frac{1}{an}\log X_n>u\right)
&\geq
\P\left(\frac{1}{n}\log\delta_n^{-1}>u+\epsilon, \frac{1}{an}\log (\delta_n^a X_n) > -\epsilon\right)\notag\\
&\geq \P\left(\frac{1}{n}\log\delta_n^{-1}>u+\epsilon\right)(1-Ce^{-n\epsilon})
\end{align}
using \eqref{XnLowerTailUpperBound} with $s=e^{-an\epsilon}$.  So
\begin{equation}
%\label{}
\liminf_{n\goesto\infty}\frac{1}{n}\log\P\left(\frac{1}{an}\log X_n>u\right)\geq -\inf_{v>u+\epsilon} \phi(v).
\end{equation}
Since $\phi$ is continuous and $\epsilon>0$ was arbitrary, this proves \eqref{XnLDBoundAbove}.

For \eqref{XnLDBoundBelow}, let $u\in(0,1)$ be given and choose $v\in(u,1)$, $\epsilon\in(0,u)$.  Then for $n$ sufficiently large we have
\begin{align}
\P\left(\frac{1}{an}\log X_n <u\right)
&\geq
\P\left(v-\epsilon<\frac{1}{n}\log\delta_n^{-1} <v, \frac{1}{an}\log (\delta_n^a X_n)<u-v\right)\notag\\
&\geq
\P\left(v-\epsilon<\frac{1}{n}\log\delta_n^{-1}<v\right)ce^{-n(v-u)}
\end{align}
Here we used \eqref{XnLowerTailLowerBound} with $s=e^{-an(v-u)}$.  Note that if $n$ is large enough then $s\leq s_0$ and the condition $\delta_n^a<\gamma s$ follows from $v-\epsilon<\frac{1}{n}\log\delta_n^{-1}$.  Therefore, since $\phi$ is decreasing on $(0,1]$,
\begin{align}
%\label{}
\liminf_{n\goesto\infty} \frac{1}{n}\log\P\left(\frac{1}{an}\log X_n <u\right)
&\geq
-\left(\inf_{v-\epsilon<w<v}\phi(w)\right)-(v-u)\notag\\
\label{LDLowerTailBoundBelow}
&=
-\phi(v)-a(v-u)
\end{align}
\eqref{LDLowerTailBoundBelow} was proved for $u<v<1$.  However, since $\phi$ is continuous and the function $-\phi(v)-av$ is decreasing in $v$ for $v\geq 1$, \eqref{LDLowerTailBoundBelow} holds for all $v\geq u$.  So take the supremum over $v\geq u$ to obtain
\begin{equation}
%\label{}
\liminf_{n\goesto\infty} \frac{1}{n}\log\P\left(\frac{1}{an}\log X_n <u\right) \geq -\psi(u).
\end{equation}
Finally 
\begin{align}
&\P\left(\frac{1}{an}\log X_n <u\right)\notag\\
&\quad\leq
\P\left(\frac{1}{n}\log\delta_n^{-1}\leq u\right)
+\E\left(1\left(\frac{1}{n}\log\delta_n^{-1}>u\right)\condP{\frac{1}{an}\log X_n<u}{\delta_n}\right)\notag\\
&\quad=
\P\left(\frac{1}{n}\log\delta_n^{-1}\leq u\right)\notag\\
&\qquad
+\E\left(1\left(\frac{1}{n}\log\delta_n^{-1}>u\right) \condP{\frac{1}{an}\log (\delta_n^a X_n)<u-\frac{1}{n}\log\delta_n^{-1}}{\delta_n}\right)\notag\\
\label{LDLowerTailUpperBound}
&\quad\leq
\P\left(\frac{1}{n}\log\delta_n^{-1}\leq u\right) +\E\left(1\left(\frac{1}{n}\log\delta_n^{-1}>u\right)Ce^{n\left(u-\frac{1}{n}\log\delta_n^{-1}\right)}\right)
\end{align}
(using \eqref{XnLowerTailUpperBound} with $s=e^{an\left(u-\frac{1}{n}\log\delta_n^{-1}\right)}$).  Apply Varadhan's lemma (see, e.g., \cite[p. 32]{dH2000}) to the second term of \eqref{LDLowerTailUpperBound}:
\begin{equation}
\begin{split}
%\label{}
\lim_{n\goesto\infty}\frac{1}{n}\log\E\left(1\left(\frac{1}{n}\log\delta_n^{-1}>u\right)Ce^{n\left(u-\frac{1}{n}\log\delta_n^{-1}\right)}\right)\\
\qquad=\sup_{v>u}(u-v-\phi(v))=-\psi(u)
\end{split}
\end{equation}
Therefore
\begin{equation}
%\label{}
\limsup_{n\goesto\infty}\frac{1}{n}\log\P\left(\frac{1}{an}\log X_n <u\right) \leq \max\set{-\phi(u),-\psi(u)}  = -\psi(u)
\end{equation}
which completes the proof.
\end{proof}

\section{Tail asymptotics}\label{TailAsymptoticsSection}

In this section we prove the fixed-pond asymptotics from Theorem \ref{QnLnAsymp}.

\begin{proof}[Proof of \eqref{QnTail}]
Recall from \eqref{logthetaQnRWrep} that $\log\!\left(\theta(Q_n)^{-1}\right)$ has the same distribution as a sum of $n$ Exponential variables of mean 1, i.e., a Gamma variable with parameters $n,1$.  So
\begin{align}
\P(\theta(Q_n)<\epsilon)
&=
\P\left(\log\!\left(\theta(Q_n)^{-1}\right)>\log \epsilon^{-1}\right)\notag\\
&=
\int_{\log\epsilon^{-1}}^\infty \frac{x^{n-1}}{(n-1)!} e^{-x}\dx
\end{align}
Make the substitution $x=(1+u)\log\epsilon^{-1}$:
\begin{align}
\P(\theta(Q_n)<\epsilon)
&=
\frac{\epsilon\left(\log\epsilon^{-1}\right)^n}{(n-1)!} \int_0^\infty (1+u)^{n-1} e^{-u\log\epsilon^{-1}}
\end{align}
Then Watson's lemma (see for instance (2.13) of \cite{MurrayAsymp1984}) implies that 
\begin{equation}
\label{thetaQnAsymptotics}
\P(\theta(Q_n)<\epsilon)\sim\frac{\epsilon\left(\log\epsilon^{-1}\right)^{n-1}}{(n-1)!}
\end{equation}
and so \eqref{QndeltanRelation} gives 
\begin{equation}
%\label{}
\P(Q_n<p_c(1+\epsilon))\sim\frac{2\sigma\epsilon\left(\log\epsilon^{-1}\right)^{n-1}}{(\sigma-1)(n-1)!}
\end{equation}
\end{proof}

Combining \eqref{thetaQnAsymptotics} with \eqref{QndeltanRelation} implies at once that 
\begin{equation}
\label{deltanTail}
\P\left(\delta_n<\epsilon\right)\asymp \epsilon(\log \epsilon^{-1})^{n-1}
\end{equation}
We use \eqref{deltanTail} to prove \eqref{RnTail}--\eqref{VnTail} using the following lemma.

\begin{lemma}
\label{XnTailLemma}
Let $\delta_n$ be a random variable satisfying \eqref{deltanTail}.  Suppose $a,\beta$ are positive constants such that $a\beta>1$, and $X_n$ is any positive random variable satisfying
\begin{equation}
\label{XnAsympUpperTail}
\condP{\delta_n^a X_n>S}{\delta}\leq C S^{-\beta}
\end{equation}
for all $S,n>0$, and
\begin{equation}
\label{XnWeakLowerTail}
\condP{\delta_n^a X_n>s_0}{\delta}\geq p_0
\end{equation}
for some $s_0,p_0>0$.  Write $\hat{X}_n=\sum_{i=1}^n X_i$.  Then
\begin{equation}
\label{XnAsymp}
\P(X_n>k)\asymp\P\left(\hat{X}_n>k\right)\asymp\frac{(\log k)^{n-1}}{k^{1/a}}
\end{equation}
as $k\goesto\infty$.
\end{lemma}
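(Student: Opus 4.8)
The plan is to establish matching upper and lower bounds for $\P(X_n > k)$ and for $\P(\hat{X}_n > k)$, using the tail asymptotic \eqref{deltanTail} for $\delta_n$ as the driving input and the conditional bounds \eqref{XnAsympUpperTail}--\eqref{XnWeakLowerTail} to transfer it to $X_n$. The heuristic is that $X_n > k$ is most efficiently achieved by making $\delta_n$ small (of order $k^{-1/a}$ up to logarithmic factors) and then using the typical scale $X_n \approx \delta_n^{-a}$; the cost of $\delta_n$ being small is exactly the polynomial-times-logarithmic expression in \eqref{deltanTail}.

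For the upper bound on $\P(X_n > k)$, I would split according to the size of $\delta_n$: write $\P(X_n > k) \leq \P(\delta_n^a < k^{-1}(\log k)^{n}) + \P(\delta_n^a X_n > (\log k)^n)$, say, with the precise threshold to be optimized. The first term is controlled directly by \eqref{deltanTail}, which gives something of order $\frac{(\log k)^{n-1}}{k^{1/a}}$ up to a constant once the logarithmic power in the threshold is chosen appropriately. For the second term, condition on $\delta_n$ and apply \eqref{XnAsympUpperTail} with $S = (\log k)^n$ to get a bound of order $(\log k)^{-n\beta}$; since $a\beta > 1$ one checks this is negligible compared to $k^{-1/a}(\log k)^{n-1}$. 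Actually a cleaner route is to integrate: $\P(X_n>k) = \E[\P(X_n > k \mid \delta_n)] \leq \E[\min(1, C(\delta_n^a k)^{-\beta})]$, and then split the expectation at $\delta_n^a = 1/k$, bounding the low-$\delta_n$ part by \eqref{deltanTail} and the high-$\delta_n$ part by integrating the tail $C(\delta_n^a k)^{-\beta}$ against the density implied by \eqref{deltanTail}; the condition $a\beta > 1$ is exactly what makes $\int_{1/k}^{\cdot} t^{-a\beta} \cdot t^{-1}(\log t^{-1})^{n-1}\,dt$ (after substitution) converge to the right order. For the lower bound on $\P(X_n > k)$, I would use \eqref{XnWeakLowerTail}: on the event $\{\delta_n^a < s_0/k\}$ we have $\P(X_n > k \mid \delta_n) \geq \P(\delta_n^a X_n > s_0 \mid \delta_n) \geq p_0$, so $\P(X_n > k) \geq p_0 \P(\delta_n^a < s_0/k) \asymp k^{-1/a}(\log k)^{n-1}$ by \eqref{deltanTail}.

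For $\hat{X}_n = \sum_{i=1}^n X_i$, the lower bound $\P(\hat{X}_n > k) \geq \P(X_n > k)$ is immediate since $X_n \leq \hat{X}_n$. For the upper bound, $\P(\hat{X}_n > k) \leq \sum_{i=1}^n \P(X_i > k/n)$, and since $n$ is fixed each summand is $\asymp (\log(k/n))^{i-1}(k/n)^{-1/a} \asymp (\log k)^{i-1} k^{-1/a}$; the dominant term is $i = n$, giving the claimed order. One subtlety: \eqref{XnAsympUpperTail} and \eqref{deltanTail} are stated for $\delta_n$, and for the $X_i$ with $i < n$ we are invoking them with $\delta_i$; but \eqref{deltanTail} holds for every fixed index (with $\asymp$-constants depending on the index), and $n$ fixed means only finitely many indices are involved, so this is harmless.

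The main obstacle, and the part requiring genuine care rather than bookkeeping, is the upper-bound integration for $\P(X_n > k)$: one must verify that the contribution from $\delta_n$ of \emph{intermediate} size — neither forced small nor giving a trivial conditional probability — does not dominate. This is precisely where the hypothesis $a\beta > 1$ enters, ensuring the relevant integral $\int (\delta^a k)^{-\beta}\, d\P(\delta_n \leq \delta)$ is, after the natural change of variables, of the same order $k^{-1/a}$ as the ``$\delta_n$ small'' contribution rather than larger. I would carry this out by substituting $\delta_n = t k^{-1/a}$ (so $\delta_n^a k = t^a$) and checking that $\int_0^\infty \min(1, C t^{-a\beta})\, t^{-1}(\log(k^{1/a}/t))^{n-1}\, dt \asymp (\log k)^{n-1}$, the logarithmic factor coming from the region $t$ of polynomial size in $k$ and the convergence at $t \to \infty$ guaranteed by $a\beta > 1$. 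Everything else is routine application of \eqref{deltanTail} and the conditional estimates.
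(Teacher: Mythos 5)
Your plan is sound and, for the single-pond bound $\P(X_n>k)$, essentially the paper's: the lower bound via $\condP{\delta_n^aX_n>s_0}{\delta_n}\geq p_0$ on the event $\set{\delta_n^a<s_0/k}$ is exactly what the paper does, and your ``integrate out $\delta_n$'' upper bound is the paper's strategy as well (the paper executes it by passing to the exact Gamma law of $\log\theta(Q_n)^{-1}$ and substituting $y=ke^{-ax}$). Two points deserve attention, though. First, \eqref{deltanTail} is only an $\asymp$ bound on the distribution function, so ``the density implied by \eqref{deltanTail}'' is not a well-defined object; you either need to integrate by parts against the cdf bound (writing $\E[f(\delta_n)]=\int |f'(\delta)|\,\P(\delta_n\leq\delta)\,d\delta$ for decreasing $f$) or, as the paper does, work with the exact density of $\log\theta(Q_n)^{-1}$ together with $\delta_n\geq c\,\theta(Q_n)$. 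Second, your substituted integral has a spurious $t^{-1}$: with $\delta_n=tk^{-1/a}$, the ``density'' in $t$ is $\asymp(\log(k^{1/a}/t))^{n-1}k^{-1/a}\,dt$, \emph{not} $t^{-1}(\log(k^{1/a}/t))^{n-1}\,dt$ -- in fact the integral you wrote diverges at $t\to 0^+$. The correct version, $\int_0^{k^{1/a}}\min(1,Ct^{-a\beta})(\log(k^{1/a}/t))^{n-1}\,dt\asymp(\log k)^{n-1}$, converges precisely because $a\beta>1$, so the conclusion is right but the bookkeeping needs repair.

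For $\hat{X}_n$ you depart from the paper, and your route is actually simpler. The paper proves the upper bound by induction on $n$, splitting $\P(\hat{X}_{n+1}>k)\leq\P(X_{n+1}>k-k')+\P(\hat{X}_n>k')$ with $k'=\floor{k/(\log k)^{a/2}}$ chosen so the second term is of strictly lower order; this more delicate split is what the paper later needs to get the sharp constant in \eqref{LnTail}, not just the $\asymp$. Since Lemma~\ref{XnTailLemma} only asserts $\asymp$, your crude union bound $\P(\hat{X}_n>k)\leq\sum_{i=1}^n\P(X_i>k/n)$ combined with the already-established $\P(X_i>k/n)\asymp(\log k)^{i-1}k^{-1/a}$ (for each fixed $i\leq n$) suffices and is more elementary. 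Your remark that the $\asymp$-constants in \eqref{deltanTail} and in the conclusion depend on the index $i$, but that only finitely many indices arise for fixed $n$, is exactly the right thing to note.
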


\begin{proof}
From \eqref{XnWeakLowerTail} and \eqref{deltanTail},
\begin{align}
\P(X_n>k)
&\geq 
\condP{\delta_n^a X_n>s_0 \Big.}{\delta_n<s_0 k^{-1/a}}\P(\delta_n<s_0 k^{-1/a})\notag\\
&\geq
\newnumberedconstant\frac{(\log k)^{n-1}}{k^{1/a}}
\end{align}
For the lower bound we condition on the Gamma random variable $\log\theta(Q_n)^{-1}$.  From \eqref{QndeltanRelation} we have $\delta_n\geq \namednumberedconstant{deltathetaconstant}\theta(Q_n)$ for some constant $\previousconstant{deltathetaconstant}>0$, so that
\begin{align}
\P(X_n>k)
&=
\E\left(\condP{\delta_n^a X_n>k\delta_n^a}{\delta_n}\right)\notag\\
&\leq
\E\left(1\wedge C(k\delta_n^a)^{-\beta}\right)\notag\\
&\leq
\newnumberedConstant\E\left(1\wedge \left(k\theta(Q_n)^a\right)^{-\beta}\right)\notag\\ \label{XnTailAsympUnsubbedIntegral}
&=
\newnumberedConstant\int_0^\infty \left(1\wedge \left(ke^{-ax}\right)^{-\beta}\right)x^{n-1}e^{-x}\dx
\end{align}
Make the substitution $y=ke^{-ax}$ to obtain
\begin{align}
&\P(X_n>k)
\leq
\newnumberedConstant\int_0^k \left(1\wedge y^{-\beta}\right)\left(\log k-\log y\right)^{n-1} \left(\frac{y}{k}\right)^{1/a}\frac{dy}{y}\notag\\
\label{XnTailAsympIntegral}
&\quad\leq
\frac{\lastConstant(\log k)^{n-1}}{k^{1/a}}\int_0^\infty \left(1\wedge y^{-\beta}\right)\left(1+\frac{\abs{\log y}}{\log k}\right)^{n-1} \frac{dy}{y^{1-1/a}}
\end{align}
The last integral in \eqref{XnTailAsympIntegral} is bounded as $k\goesto\infty$: the singularity as $y\goesto 0^+$ is integrable since $1-1/a>0$, and the singularity as $y\goesto\infty$ is integrable since the exponent in $y^{-1-\beta+1/a}$ has $-1-\beta+1/a<-1$ since $a\beta>1$.

To extend \eqref{XnAsymp} to $\hat{X}_n$, assume inductively that $\P(\hat{X}_n>k)\asymp (\log k)^{n-1}/k^{1/a}$.  (The case $n=1$ is already proved since $\hat{X}_1=X_1$.)  The bound $\P(\hat{X}_{n+1}>k)\geq \P(X_{n+1}>k)$ is immediate, and we can estimate
\begin{align}
\P\left(\hat{X}_{n+1}>k\right)\leq \P\left(X_{n+1} >k-k'\right) +\P\left(\hat{X}_n >k'\right)
\end{align}
where we set $k'=\floor{k/(\log k)^{a/2}}$.  Then $k-k'\sim k$ and $\log (k-k')\sim\log k' \sim \log k$, so that
\begin{equation}
%\label{}
\P\left(X_{n+1} >k-k'\right)\asymp \frac{(\log k)^n}{k^{1/a}}
\end{equation}
while
\begin{equation}
%\label{}
\P\left(\hat{X}_n >k'\right)\asymp \frac{(\log k')^{n-1}}{\left(k'\right)^{1/a}}\asymp \frac{(\log k)^{n-1/2}}{k^{1/a}}
\end{equation}
which is of lower order.  This completes the induction.
\end{proof}

\begin{proof}[Proof of \eqref{RnTail}--\eqref{VnTail}]
These relations follow immediately from \eqref{deltanTail} and Lemma \ref{XnTailLemma}; the bounds \eqref{XnAsympUpperTail}--\eqref{XnWeakLowerTail} are immediate consequences of Proposition \ref{LnRnVnBoundProp}.  As in section \ref{SLLNCLTproof}, the asymptotics for $R'_n$ follow from the asymptotics for $\hat{R}_n$ and the bound $R_n\leq R'_n\leq \hat{R}_n$.
\end{proof}

\begin{proof}[Proof of \eqref{LnTail}]
For $L_n$, we can use the exact formula $\condP{L_n>k}{\delta_n}=(1-\delta_n)^k$.  Write $\delta_n=g(\theta(Q_n))$, where $g(p)$ is a certain continuous and increasing function.  By \eqref{QndeltanRelation}, $g(p)\sim \frac{\sigma-1}{2\sigma} p$ as $p\goesto 0^+$; as above we will use the bound $g(p)\geq cp$ for some constant $c>0$.  Proceeding as in \eqref{XnTailAsympUnsubbedIntegral} and \eqref{XnTailAsympIntegral} we obtain the exact formula
\begin{align}
&\P(L_n>k)
=
\frac{1}{k(n-1)!}\int_0^k \bigl(1-g(y/k)\bigr)^k (\log k -\log y)^{n-1} \dy\notag\\
\label{LnAbovekFormula}
&\quad=
\frac{(\log k)^{n-1}}{k(n-1)!} \int_0^\infty \mathbbm{1}(y\leq k)\bigl(1-g(y/k)\bigr)^k \left(1-\frac{\log y}{\log k}\right)^{n-1} dy
\end{align}
But the integral in \eqref{LnAbovekFormula} converges to $\int_0^\infty e^{-\frac{\sigma-1}{2\sigma}y}\dy=\frac{2\sigma}{\sigma-1}$ as $k\goesto\infty$: pointwise convergence follows from $g(p)\sim \frac{\sigma-1}{2\sigma} p$, and we can uniformly bound the integrand using 
\begin{equation}
%\label{}
\bigl(1-g(y/k)\bigr)^k \leq e^{-kg(y/k)}\leq e^{-cy}
\end{equation}
Lastly, a simple modification of the argument for $\hat{X}_n$ extends \eqref{LnTail} to $\hat{L}_n$.
\end{proof}

\section{Pond bounds: proof of Proposition \ref{LnRnVnBoundProp}}
\label{PondTailBoundsProofSection}

In this section we prove the tail bounds \eqref{LTailUpperBounds}--\eqref{VTailLowerBound}.  Since the laws of $L_n$, $R_n$ and $V_n$ do not depend on $n$ except through the value of $\delta_n$, we will omit the subscript in this section.  For convenient reference we recall the structure of the bounds:
\begin{align}
\label{XStrongerUpperTail}
\condP{\delta^a X > S }{\delta} &\leq Ce^{-cS},\\
\label{XLowerTailUpperBound}
\condP{\delta^a X < s }{ \delta} &\leq Cs\\
\intertext{for all $S$ and $s$, and}
\label{XLowerTailLowerBound}
\condP{\delta^a X < s }{\delta} &\geq cs^{1/a}
\end{align}
on the event $\set{\delta^a<\gamma s}$, for $s\leq s_0$.  We have $a=1$ for $X=L$ and $X=R$, and $a=2$ for $X=V$.

In \eqref{XLowerTailLowerBound} it is necessary to assume $\delta^a<\gamma s$.  This is due only to a discretization effect: for any $\N$-valued random variable $X$, we have $\condP{\delta^a X<s}{\delta}=0$ whenever $\delta^a<s$.  Indeed, the bounds \eqref{LTailLowerBound}--\eqref{VTailLowerBound} can be proved with $\gamma=1$, although it is not necessary for our purposes.

Note that, by proper choice of $C$ and $s_0$, we can assume that $S$ is large in \eqref{XStrongerUpperTail} and $s$ is small in \eqref{XLowerTailUpperBound} and \eqref{XLowerTailLowerBound}.  Since we only consider $\N$-valued random variables $X$, we can assume $\delta$ is small in \eqref{XLowerTailUpperBound}, say $\delta<1/2$ (otherwise take $s<(1/2)^a$ without loss of generality). Moreover, Theorem \ref{InvasionStructureTheorem} shows that $L$, $R$ and $V$ are all stochastically decreasing in $\delta$.  Consequently it suffices to prove \eqref{XStrongerUpperTail} for $\delta$ small, say $\delta<1/2$.  Finally the constraint $\delta^a<\gamma s_0$ makes $\delta$ small in \eqref{XLowerTailLowerBound} also. 

We note for subsequent use the inequalities
\begin{equation}
%\label{}
(1-x)^y\leq e^{-xy}
\end{equation}
for $x\in(0,1),y>0$ and
\begin{equation}
\label{1minus1minusxtotheybound}
1-(1-x)^y\leq 1-e^{-2xy}\leq 2xy
\end{equation}
for $x\in(0,1/2),y>0$, which follow from $\log(1-x)\leq -x$ for $x\in(0,1)$ and $\log(1-x)\geq -2x$ for $x\in(0,1/2)$.

\subsection{The backbone length \texorpdfstring{$L$}{L}: proof of \texorpdfstring{\eqref{LTailUpperBounds}}{(\ref{LTailUpperBounds})} and \texorpdfstring{\eqref{LTailLowerBound}}{(\ref{LTailLowerBound})}}
% [note: if equation number style changes, this might need changing.....]

From Theorem \ref{InvasionStructureTheorem}, $L$ is a geometric random variable with parameter $\delta$.  So
\begin{align}
\condP{L>S/\delta}{ \delta}
&=
(1-\delta)^{\floor{S/\delta}}\notag\\
\label{LStrongerUpperTail}
&\leq
e^{-\delta\floor{S/\delta}}
\leq e^{-S+\delta}\leq e^{-S+1}
\end{align}
since $\delta\leq 1$, proving \eqref{XStrongerUpperTail}.  For \eqref{XLowerTailUpperBound} and \eqref{XLowerTailLowerBound}, we have
\begin{equation}
\label{LLowerTailFormula}
\condP{L < s/\delta}{ \delta}
=
1-(1-\delta)^{\ceil{s/\delta}-1}.
\end{equation}
For $\delta\leq\frac{1}{2}$ we can use \eqref{1minus1minusxtotheybound} to get
\begin{align}
\condP{L < s/\delta}{\delta}
&\leq
2\delta(\ceil{s/\delta}-1)\leq 2s
\end{align}
which proves \eqref{XLowerTailUpperBound}.  For \eqref{XLowerTailLowerBound}, take $\gamma_L=1/2$.  Then on the event $\set{\delta<\gamma_L s}$ we have $\ceil{s/\delta}\geq 3$ so that expanding \eqref{LLowerTailFormula} as a binomial series gives
\begin{align}
\condP{L < s/\delta}{ \delta}
&\geq
\left(\ceil{s/\delta}-1\right)\delta-\frac{1}{2}\left(\ceil{s/\delta}-1\right)\left(\ceil{s/\delta}-2\right)\delta^2\notag\\
&\geq
(s-\delta)-\frac{1}{2}s(s-\delta)\notag\\
\label{LLowerTailLowerBound}
&\geq
\frac{s}{2}\left(1-\frac{s}{2}\right)\geq \frac{s}{4}
\end{align}
for $s\leq 1=s_0$.  So \eqref{XLowerTailLowerBound} holds.

\subsection{The pond radius \texorpdfstring{$R$}{R}: proof of \texorpdfstring{\eqref{RTailUpperBounds}}{(\ref{RTailUpperBounds})} and \texorpdfstring{\eqref{RTailLowerBound}}{(\ref{RTailLowerBound})}}

Conditional on $\delta$ and $L$, $R$ is the maximum height of a percolation cluster with parameter $p_c(1-\delta)$ started from a path of length $L$.  We have $R\geq L$ so \eqref{XLowerTailUpperBound} follows immediately from the corresponding bound for $L$.  $R$ is stochastically dominated by 
\begin{equation}
%\label{}
L+\max_{1\leq i\leq L} \tilde{R}_i
\end{equation}
where $\tilde{R}_i$ is the maximum height of a branching process with offspring distribution Binomial($\sigma,p_c(1-\delta)$) started from a single vertex, independently for each $i$.  Define
\begin{equation}
%\label{}
a_k=\condP{\tilde{R}_i>k}{\delta}
\end{equation}
for $k>0$.  Thus $a_k$ is the probability that the branching process survives to generation $k+1$.  By comparison with a critical branching process, 
\begin{equation}
\label{CriticalBranchingProcessBound}
a_k\leq \frac{\namednumberedConstant{CritBPConst}}{k}
\end{equation}
for some constant $\previousConstant{CritBPConst}$.  On the other hand, $a_k$ satisfies
\begin{equation}
%\label{}
a_{k+1}=1-f(1-a_k)
\end{equation}
where $f(z)$ is the generating function for the offspring distribution of the branching process.  (This is a reformulation of the well-known recursion for the extinction probability.)  In particular, since $f'(z)\leq f'(1)=\sigma p_c(1-\delta)=1-\delta$,
\begin{equation}
\label{SubcriticalBranchingProcessRecursion}
a_{k+1}\leq f'(1)a_k=a_k(1-\delta).
\end{equation}
Combining \eqref{CriticalBranchingProcessBound} with \eqref{SubcriticalBranchingProcessRecursion},
\begin{equation}
%\label{}
a_{k+j}\leq a_k(1-\delta)^j\leq \frac{\previousConstant{CritBPConst}}{k}e^{-\delta j}
\end{equation}
and taking $k=\ceil{S/2\delta}\geq S/2\delta$, $j=\floor{S/\delta}-\ceil{S/2\delta}\geq S/2\delta-2$,
\begin{align}
\condP{\tilde{R}_i>\frac{S}{\delta}}{\delta}
&=
a_{\floor{S/\delta}}
\leq
\frac{2\previousConstant{CritBPConst}\delta}{S}e^{\delta(S/2\delta-2)}\notag\\
&\leq
\frac{\newnumberedConstant \delta e^{-\newnumberedconstant S}}{S}
\end{align}
Using this estimate we can compute
\begin{align}
&\condP{R>\frac{S}{\delta}}{\delta}\notag\\
&\qquad\leq
\condP{L>\frac{S}{2\delta}}{\delta}+\condP{L\leq \frac{S}{2\delta},\tilde{R}_i>\frac{S}{2\delta}\text{ for some $i\leq S/2\delta$}}{\delta}\notag\\
&\qquad\leq
\newnumberedConstant e^{-\newnumberedconstant S}+\left(\frac{S}{2\delta}\right) \frac{\newnumberedConstant \delta e^{-\newnumberedconstant S}}{S} 
\leq 
\newnumberedConstant e^{-\newnumberedconstant S}
\end{align}
Similarly
\begin{align}
\condP{R<\frac{s}{\delta}}{\delta}
&\geq
\condP{L<\frac{s}{2\delta}, \tilde{R}_i<\frac{s}{2\delta}\text{ for all $i<\frac{s}{2\delta}$}}{\delta}\notag\\
&\geq
\newnumberedconstant s\left(1-\frac{\newnumberedConstant\delta e^{-\newnumberedconstant s}}{s}\right)^{s/2\delta}\geq \newnumberedconstant s
\end{align}
provided $\delta$ is sufficiently small compared to $s$, i.e., provided $\gamma_R$ is small enough.

\subsection{The pond volume \texorpdfstring{$V$}{V}: proof of \texorpdfstring{\eqref{VTailUpperBounds}}{(\ref{VTailUpperBounds})} and \texorpdfstring{\eqref{VTailLowerBound}}{(\ref{VTailLowerBound})}}

From Theorem \ref{InvasionStructureTheorem}, conditional on $\delta$ and $L$, $V$ is the number of edges in a percolation cluster with parameter $p_c(1-\delta)$, started from a path of length $L$ and with no edges emerging from the top of the path.  We can express $V$ in terms of the return time of a random walk as follows.

Start with an edge configuration with $L$ backbone edges marked as occupied.  Mark as unexamined the $(\sigma-1)L$ edges adjacent to the backbone, not including the edges emerging from the top.  At each step, take an unexamined edge (if any remain) and either (1) with probability $1-p_c(1-\delta)$, mark it as vacant; or (2) with probability $p_c(1-\delta)$, mark it as occupied and mark its child edges as unexamined.  Let $N_k$ denote the number of unexamined edges after $k$ steps.  Then it is easy to see that $N_k$ is a random walk $N_k=N_0+\sum_{i=1}^k Y_i$ (at least until $N_k=0$) where $N_0=(\sigma-1)L$ and 
\begin{equation}
%\label{}
Y_i=\begin{cases}\sigma-1&\text{w.p. $p_c(1-\delta)$,}\\ -1&\text{w.p. $1-p_c(1-\delta)$.}\end{cases}
\end{equation}
Let $T=\inf\set{k : N_k=0}$.  ($T$ is finite a.s. since $\condE{Y_i}{\delta}=-\delta<0$ and $N_k$ can jump down only by 1.)  $T$ counts the total number of off-backbone edges examined, namely the number of non-backbone edges in the cluster and on its boundary, not including the edges from the top of the backbone.  Consequently 
\begin{equation}
%\label{}
T=[V-L]+[(\sigma-1)V+\sigma]-\sigma=\sigma V-L
\end{equation}
In order to apply random walk estimates we write $X_i=Y_i+\delta$, $Z_k=\sum_{i=1}^k X_i$, so that $\condE{X_i}{\delta}=0$; $\namednumberedconstant{XiVarianceLower}\leq\condE{X_i^2}{\delta}\leq \namednumberedConstant{XiVarianceUpper}$ for universal constants $\previousconstant{XiVarianceLower},\previousConstant{XiVarianceUpper}$; and $N_k=Z_k-k\delta+(\sigma-1)L$.  Note that 
\begin{align}
\set{V>V_0}
&=
\set{T >\sigma V_0-L}
\notag\\
&\subseteq
\set{Z_{\floor{\sigma V_0-L}}>\delta\floor{\sigma V_0-L}-(\sigma-1)L}
\end{align}
so using, for instance, Freedman's inequality \cite[Proposition 2.1]{Freedman1975} leads after some computation to
\newsavebox{\neverusedagain}
\sbox{\neverusedagain}{$\namednumberedconstant{filler}\namednumberedconstant{SLetabound}$}
\begin{align}
&\condP{V>S_0 L/\delta}{\delta,L}
\leq
\condP{Z_{\floor{L(\sigma S_0-\delta)/\delta}}>L(\sigma S_0-2\delta-\sigma+1)}{\delta,L}\notag\\
&\qquad\leq
\exp\left(-\frac{L^2(\sigma S_0-2\delta-\sigma+1)^2}{2(\sigma L(\sigma S_0-2\delta-\sigma+1)+\previousConstant{XiVarianceUpper} L(\sigma S_0-\delta)/\delta)}\right)\notag\\
&\qquad\leq
\exp\left(-\previousconstant{filler} S_0 L\delta(1-2/S_0)^2\right)
\leq \exp(-\previousconstant{SLetabound} S_0 L\delta)
\end{align}
if $S_0\geq 3$, say.  Then, setting $S_0=S/\delta	 L$,
\begin{align}
\condP{V>S/\delta^2}{\delta}
&\leq
\condP{L> S/3\delta}{\delta}+\condP{V>S_0 L/\delta, L\leq  S/3\delta}{\delta}\notag\\
&\leq
\newnumberedConstant e^{-\newnumberedconstant S}+\exp\left(-\previousconstant{SLetabound} (S/\delta L) L\delta \right)\notag\\
\label{VStrongerUpperTail}
&\leq
\newnumberedConstant e^{-\newnumberedconstant S}
\end{align}
which proves \eqref{XStrongerUpperTail}.

For \eqref{XLowerTailUpperBound}, apply Freedman's inequality again:
\begin{align}
\condP{V\leq s/\delta^2}{\delta,L}
&=
\condP{T\leq \sigma s/\delta^2-L}{\delta,L}\notag\\
&=
\condP{\min_{k\leq\sigma s/\delta^2-L}(Z_k-k\delta) \leq -(\sigma-1)L}{\delta,L}\notag\\
&\leq
\condP{\min_{k\leq\sigma s/\delta^2} Z_k\leq -(L-\delta(\sigma s/\delta^2))}{\delta,L}\notag\\
&\leq
\exp\left(-\frac{(L-\sigma s/\delta)^2}{2(\sigma(L-\sigma s/\delta)+\newnumberedConstant s/\delta^2)}\right)\label{VLowerTailFreedman}\\
&\leq
\exp\left(-c (\delta L-\sigma s)^2 /s\right)
\end{align}
(where in the denominator of \eqref{VLowerTailFreedman} we use $L\leq s/\delta^2$ since $V\geq L$).  So
\begin{align}
\label{VLowerTailUpperBoundSum}
\begin{split}
&\condP{V\leq s/\delta^2}{\delta}
\leq
\condP{L<2\sigma s/\delta}{\delta}\\
&\qquad+
\condE{\indicator{L\geq 2\sigma s/\delta} \exp\left(-c (\delta L-\sigma s)^2/s\right)}{\delta}
\end{split}
\end{align}
The first term in \eqref{VLowerTailUpperBoundSum} is at most $\newnumberedConstant s$ from \eqref{LTailUpperBounds} and will therefore be negligible compared to $\sqrt{s}$.  For the second term, note that for $L\geq \sigma s/\delta$,
\begin{equation}
%\label{}
\exp\left(-c(\delta L-\sigma s)^2/s\right)=\int_{\sigma s/\delta}^\infty \indicator{l>L} \frac{2c\delta(\delta l-\sigma s)}{s} e^{-c(\delta l-\sigma s)^2/s} dl
\end{equation}
and so, with the substitution $x\sqrt{s}=\delta l-\sigma s$,
\begin{align}
&\condE{\indicator{L\geq 2\sigma s/\delta} \exp\left(-c (\delta L-\sigma s)^2/s\right)}{\delta}\notag\\
&\qquad=
2c \int_{\sigma s/\delta}^\infty \condP{L<l}{\delta}\frac{\delta(\delta l-\sigma s)}{s} e^{-c(\delta l-\sigma s)^2/s} dl\notag\\
&\qquad=
2c \int_0^\infty \condP{L<\frac{x\sqrt{s}+\sigma s}{\delta}}{\delta} x e^{-cx^2} dx\notag\\
&\qquad\leq
\newnumberedConstant \int_0^\infty (x\sqrt{s}+\sigma s)xe^{-cx^2}dx
\leq
\newnumberedConstant \sqrt{s}
\end{align}
which proves \eqref{XLowerTailUpperBound}

Finally, for \eqref{XLowerTailLowerBound}, the Berry-Esseen inequality (see for instance \cite[Theorem 2.4.9]{Durrett2005}) implies
\begin{equation}
%\label{}
\abs{\condP{Z_k<-x\sqrt{k}\sqrt{\condE{X_i^2}{\delta}}}{\delta}-\Phi(-x)}\leq\frac{\namednumberedConstant{BEConst}}{\sqrt{k}}
\end{equation}
where $\Phi(x)=\P(G<x)$ for $G$ a standard Gaussian, and $\previousConstant{BEConst}$ is some absolute constant.  In particular, using $0<\previousconstant{XiVarianceLower}\leq \condE{X_i^2}{\delta}$,
\begin{equation}
%\label{}
\condP{Z_k<-(\sigma-1)\sqrt{k}}{\delta}\geq \namednumberedconstant{BEconsequenceconst} >0
\end{equation}
for $k\geq\namednumberedConstant{BEThreshold}$.  Choose $\gamma_V = 1\wedge \gamma_L^2\wedge (\previousConstant{BEThreshold}+1)^{-1}$.  Then we have $s/\delta^2\geq 1$ (so we may bound $\sigma s/\delta^2-\sqrt{s}/\delta\geq s/\delta^2$); $\sqrt{s}/\delta\leq \gamma_L$; and $\floor{s/\delta^2}\geq \previousConstant{BEThreshold}$, so that
\begin{align}
\condP{V<s/\delta^2}{\delta}
&=
\condP{T<\sigma s/\delta^2 - L}{\delta}\notag\\
&\geq
\condP{T<\sigma s/\delta^2 - \sqrt{s}/\delta,L<\sqrt{s}/\delta}{\delta}\notag\\
&\geq
\condP{T<s/\delta^2,L<\sqrt{s}/\delta}{\delta}\notag\\
&\geq
\condP{Z_{\floor{s/\delta^2}}< \delta\floor{s/\delta^2}-(\sigma-1)L,L<\sqrt{s}/\delta}{\delta}\notag\\
&\geq
\condP{Z_{\floor{s/\delta^2}}<-(\sigma-1)\sqrt{s/\delta^2}}{\delta}\condP{L<\sqrt{s}/\delta}{\delta}\notag\\
&\geq
\newnumberedconstant\sqrt{s}
\end{align}
proving \eqref{XLowerTailLowerBound}.

\setcounter{constantsubscript}{0} % this is a workaround since it does not seem to reset automatically when a new section is entered

\section{Percolation with defects}\label{PercolationWithDefectsSection}

In this section we prove
\begin{equation}
\label{PercWithDefectsAsymptotics}
\P_{p_c}(o\overset{n}{\connectsto}\boundary B(k))\asymp k^{-2^{-n}}
\end{equation}
The case $n=0$ is a standard branching process result.  For $n>0$, proceed by induction.  Write $C(o)$ for the percolation cluster of the root $o$.  The lower bound follows from the following well-known estimate:
\begin{equation}
\label{ClusterSizeNAsymptotics}
\P_{p_c}(\abs{C(o)}>N)\asymp N^{-1/2}
\end{equation}
If $C(o)>N$ then there are at least $N$ vertices $v_1,\dotsc,v_N$ on the outer boundary of $C(o)$, any one of which may have a connection to $\partial B(k)$ with $n-1$ defects.  As a worst-case estimate we may assume that $v_1,\dotsc,v_N$ are still at distance $k$ from $\boundary B(k)$, so that by independence we have
\begin{align}
\P_{p_c}(o\overset{n}{\connectsto}\partial B(k))
&\geq
\P_{p_c}(\abs{C(o)}>N) \left(1-\left(1-\P_{p_c}\bigl(o\overset{n-1}{\connectsto}\boundary B(k)\bigr)\right)^N\right)\notag\\
&\geq
\frac{c_1}{\sqrt{N}} \left(1-(1-c_2 k^{-2^{-n+1}})^N\right)
\end{align}
for constants $c_1,c_2$.  If we set $N=k^{2^{-n+1}}$ then the second factor is of order 1, and the lower bound is proved.  For the upper bound, use a slightly stronger form of \eqref{ClusterSizeNAsymptotics} (see for instance \cite[p. 260]{GrimmettPerc1999}): 
\begin{equation}
\label{ClusterSizeNAsymptoticsStronger}
\P_{p_c}(\abs{C(o)}=N)\asymp (N+1)^{-3/2}
\end{equation}
Now if $C(o)=N$, with $N\leq k/2$, then there are at most $\sigma N$ vertices on the outer boundary of $C(o)$, one of which must have a connection with $n-1$ defects of length at least $k-N\geq k/2$.  So
\begin{align}
&\P_{p_c}(o\overset{n}{\connectsto}\boundary B(k))\notag\\
&\qquad\leq
\P_{p_c}(\abs{C(o)}>k/2)\notag\\
&\qquad\qquad
+\sum_{N=0}^{\floor{k/2}} \P_{p_c}(\abs{C(o)}=N) \left(1- \left(1-\P_{p_c}\bigl(0\overset{n-1}{\connectsto}\boundary B(k/2)\bigr)\right)^{\sigma N}\right)\notag\\
&\qquad\leq
\frac{c_3}{\sqrt{k}}+ \sum_{N=0}^\infty \frac{c_4}{(N+1)^{3/2}}\left(1- \bigr(1-c_5 k^{-2^{-n+1}}\bigr)^{\sigma N}\right)\notag\\
&\qquad\leq
\frac{c_3}{\sqrt{k}}+ \sum_{N<k^{2^{-n+1}}}\frac{c_6 k^{-2^{-n+1}}N}{(N+1)^{3/2}} +\sum_{N\geq k^{2^{-n+1}}} c_4 N^{-3/2}\notag\\
&\qquad\leq
c_3 k^{-2^{-1}}+c_7 \left(k^{2^{-n+1}}\right)^{-1/2}
\end{align}
which proves the result (the first term is an error term if $n\geq 2$ and combines with the second if $n=1$).

\bibliographystyle{plain}
\bibliography{../TeX/references}

\begin{thebibliography}{10}

\bibitem{AGdHS2008}
Omer Angel, Jesse Goodman, Frank den Hollander, and Gordon Slade.
\newblock Invasion percolation on regular trees.
\newblock {\em Annals of Probability}, 36(2):420--466, 2008.

\bibitem{AngelGMerle}
Omer Angel, Jesse Goodman, and Mathieu Merle.
\newblock Scaling limit of the invasion percolation cluster on a regular tree.
\newblock arXiv:0910.4205v1 [math.PR], 2009.

\bibitem{BenjaminiSchramm1996}
Itai Benjamini and Oded Schramm.
\newblock Percolation beyond $\mathbb{Z}^d$, many questions and a few answers.
\newblock {\em Electronic Communications in Probability}, 1:71--82, 1996.

\bibitem{CCN1985}
J.~T. Chayes, L.~Chayes, and C.~M. Newman.
\newblock The stochastic geometry of invasion percolation.
\newblock {\em Communications in Mathematical Physics}, 101(3):383--407, 1985.

\bibitem{DS2009}
Michael Damron and {Art\"{e}m} Sapozhnikov.
\newblock Outlets of 2d invasion percolation and multiple-armed incipient
  infinite clusters.
\newblock arXiv:0903.4496v1 [math.PR], 2009.

\bibitem{DSV2008}
Michael Damron, {Art\"{e}m} Sapozhnikov, and B\'{a}lint {V\'{a}gv\"{o}lgyi}.
\newblock Relations between invasion percolation and critical percolation in
  two dimensions.
\newblock arXiv:0806.2425v1 [math.PR], 2008.

\bibitem{dH2000}
Frank den Hollander.
\newblock {\em Large Deviations}.
\newblock Fields Institute Monographs. American Mathematical Society, 2000.

\bibitem{Durrett2005}
Rick Durrett.
\newblock {\em Probability: theory and examples}.
\newblock Duxbury Advanced Series. Duxbury, third edition, 2005.

\bibitem{Freedman1975}
David~A. Freedman.
\newblock On tail probabilities for martingales.
\newblock {\em Annals of Probability}, 3(1):100--118, 1975.

\bibitem{GrimmettPerc1999}
Geoffrey Grimmett.
\newblock {\em Percolation}.
\newblock Grundlehren der mathematischen Wissenschaften. Springer-Verlag,
  second edition, 1999.

\bibitem{HPS1999}
O.~H{\"{a}}ggstr{\"{o}}m, Y.~Peres, and R.~H. Schonmann.
\newblock Percolation on transitive graphs as a coalescent process: relentless
  merging followed by simultaneous uniqueness.
\newblock In {\em Perplexing Problems in Probability: Festschrift in honor of
  Harry Kesten}, pages 69--90. Birkh{\"{a}}user, 1999.

\bibitem{Jarai2003}
Antal J\'{a}rai.
\newblock Invasion percolation and the incipient infinite cluster in 2{D}.
\newblock {\em Communications in Mathematical Physics}, 236(2):311--334, 2003.

\bibitem{LyonsPeresProbOnTreesNets}
Russell Lyons and Yuval Peres.
\newblock Probability on trees and networks.
\newblock In preparation; see http://mypage.iu.edu/\string~rdlyons/, 2009.

\bibitem{MurrayAsymp1984}
J.~D. Murray.
\newblock {\em Asymptotic Analysis}.
\newblock Number~48 in Applied Mathematical Sciences. Springer-Verlag, 1984.

\bibitem{NewmanStein1995}
C.~M. Newman and D.~L. Stein.
\newblock Broken ergodicity and the geometry of rugged landscapes.
\newblock {\em Physical Review E}, 51(6):5228--5238, 1995.

\bibitem{NickWilk1983}
Bernie Nickel and David Wilkinson.
\newblock Invasion percolation on the cayley tree: exact solution of a modified
  percolation model.
\newblock {\em Physical Review Letters}, 51(2):71--74, 1983.

\bibitem{vdBJV2007}
Jacob van~den Berg, Antal~A. J\'{a}rai, and B\'{a}lint {V\'{a}gv\"{o}lgyi}.
\newblock The size of a pond in {2D} invasion percolation.
\newblock {\em Electronic Communications in Probability}, 12:411--420, 2007.

\end{thebibliography}

\end{document}